\newtheorem*{rep@theorem}{\rep@title}
\newcommand{\newreptheorem}[2]{%
\newenvironment{rep#1}[1]{%
 \def\rep@title{#2 \ref{##1}}%
 \begin{rep@theorem}}%
 {\end{rep@theorem}}}
\theoremstyle{plain}
\newtheorem{theorem}{Theorem}[section]
\newtheorem{thm}{Theorem}[section]
\newtheorem{prop}[theorem]{Proposition}
\newtheorem{lemma}[theorem]{Lemma}
\theoremstyle{definition}
\newtheorem{defn}[theorem]{Definition}
\newtheorem{clm}[theorem]{Claim}
\newtheorem*{rem*}{Remark}
\newtheorem*{proposition*}{Proposition}
\newcommand\ex{\ensuremath{\mathrm{ex}}}
\newcommand\cA{{\mathcal A}}
\newcommand\cF{{\mathcal F}}
\newcommand\cN{{\mathcal N}}
\newcommand\cQ{{\mathcal Q}}
\title{On Turán-type problems and the abstract chromatic number}
\author{
  \begin{minipage}[t]{0.3\linewidth}
    \centering
    D\'aniel Gerbner\thanks{HUN-REN Alfr\'ed R\'enyi Institute of Mathematics, E-mail: \texttt{gerbner@renyi.hu}. Research supported by the National Research, Development and Innovation Office - NKFIH under the grants FK 132060 and KKP-133819.}
  \end{minipage}
  \begin{minipage}[t]{0.3\linewidth}
    \centering
    Hilal Hama Karim\thanks{Department of Computer Science and Information Theory, Faculty of Electrical Engineering and Informatics, Budapest University of Technology and Economics, Műegyetem rkp. 3., H-1111 Budapest, Hungary. E-mail: \texttt{hilal.hamakarim@edu.bme.hu}. Research supported by the National Research, Development and Innovation Office - NKFIH under the grant FK 132060.}
  \end{minipage}
  \begin{minipage}[t]{0.3\linewidth}
    \centering
    Gaurav Kucheriya\thanks{Department of Applied Mathematics, Charles University, Prague, Czechia, Email: \texttt{gaurav@kam.mff.cuni.cz}. Supported by GA\v{C}R grant 22-19073S and SVV–2023–260699.}
  \end{minipage}
}
\date{}
\begin{document}
\maketitle

\begin{abstract}
    In 2020, Coregliano and Razborov introduced a general framework to study limits of combinatorial objects, using logic and model theory. They introduced the abstract chromatic number and proved/reproved multiple Erdős-Stone-Simonovits-type theorems in different settings. In 2022, Coregliano extended this by showing that similar results hold when we count copies of $K_t$ instead of edges.

    Our aim is threefold. First, we provide a purely combinatorial approach. Second, we extend their results by showing several other graph parameters and other settings where Erdős-Stone-Simonovits-type theorems follow. Third, we go beyond determining asymptotics and obtain corresponding stability, supersaturation, and sometimes even exact results.
\end{abstract}

\section{Introduction}

One of the most fundamental results in extremal combinatorics is the theorem of Turán \cite{T1941}, which determines the maximum number of edges among $n$-vertex graphs that do not contain $K_{k+1}$ as a subgraph, in other words, $K_{k+1}$-free graphs. More generally, given a graph $F$, let $\ex(n,F)$ denote the largest $|E(G)|$ among $n$-vertex $F$-free graphs $G$. Tur\'an's theorem \cite{T1941} states that $\ex(n,K_{k+1})=|E(T(n,k))|$, where $T(n,k)$ is the complete $k$-partite graph with each part of order $\lfloor n/k\rfloor$ or $\lceil n/k\rceil$. The celebrated Erd\H{o}s-Stone-Simonovits (ESS) theorem \cite{ES1966,ES1946} is the most general result in the area, which states that the same holds if we forbid another graph with chromatic number $k+1$, apart from an error term $o(n^2)$, i.e., for any graph $F$ we have $\ex(n,F)=|E(T(n,\chi(F)-1))|+o(n^2)$. Note that this determines the asymptotics of $\ex(n,F)$, if $F$ is not bipartite.

There are several different generalizations of the Turán problem. One line of research is to consider graphs with some extra structure. Various examples include vertex-ordered \cite{pachtar,tard}, cyclically ordered \cite{bkv} and edge-ordered graphs \cite{GMNPTV}. Probably the main common theme in all of these studies is obtaining an analogous result to the Erd\H{o}s-Stone-Simonovits theorem.  The essence of this is to find an appropriate notion of ``chromatic number" in those contexts so that it can play the role of the usual chromatic number in the ESS result. For instance, we have the \textit{interval chromatic number} for the vertex-ordered graphs and the \textit{order chromatic number} for the edge-ordered graphs.

There have been attempts to study these types of problems in a unified and general way. Coregliano and Razborov \cite{CoregRazb} introduced a general model theoretic framework  to study limits of combinatorial objects. They defined \textit{abstract chromatic number} of ``open interpretations" on theories of graphs to capture such different notions of chromatic numbers in a unified way. They obtained the ESS result for the density of the edges in this general setting and Coregliano \cite{Coreg} extended this to the density of cliques. 

Both in the ad hoc manner and in the unified approach, the general aim is to determine the objective extremum (e.g. maximum number of edges or cliques) among the set of all graphs that can be underling graphs of the graphs with the extra structure that possess some desired properties (e.g. not containing certain forbidden configurations).  For instance, in vertex-ordered graphs, we are interested in the maximum number of edges of an $n$-vertex vertex-ordered graph $G$ avoiding $F$ in an ordered sense. The ordering does not play any role in counting the number of edges, thus we can think of this as counting the number of edges of the \textit{underlying graph} of $G$, i.e., the ordinary graph we obtain from $G$ by simply ignoring the ordering. The problem then reduces to finding the largest number of edges among graphs that can be underlying graphs of $F$-free vertex-ordered graphs. This way, the family of all graphs is partitioned into a family $\cA(F)$ of \textit{allowed graphs} and a family $\cF(F)$ of forbidden graphs. In each case of graphs with an extra structure, and in the model theoretic approach, the corresponding chromatic number is defined in a way that if its value for a graph $F$ is $k$, then we have that the Tur\'an graph $T(n,k-1)$ is among $\cA(F)$ and for $n$ large enough $T(n,k)$ is in $\cF(F)$. This is the core idea in the proofs for the ESS-like results.

In this paper we introduce a general, unified and yet purely combinatorial approach. We consider partitions $(\cA,\cF)$ of the family of all graphs into $\cA$ and $\cF$, and define the ``abstract chromatic number" of such partitions. Let $K_k(n)$ denote $T(nk,k)$ and let $T(n,\infty)=K_n$. 

\begin{defn}
We say that a partition $(\cA,\cF)$ is \textit{Turán-suitable} 
if there is a $k$ such that for sufficiently large $n$, each complete $(k-1)$-partite graph with each part of order at least $n$ is in $\cA$ but no $G\in\cA$ contains $T(n,k)$ as a subgraph, or if $K_n\in\cA$ for sufficiently large $n$. For simplicity, we will say \textit{suitable} instead of Turán-suitable for the rest of this paper. 
\end{defn}

Let us see some examples of suitable partitions. We say that a partition is \textit{monotone} if $G\in\cA$ implies that every subgraph of $G$ is in $\cA$. This is clearly the case when a graph with extra structure is forbidden. We say that the partition is \textit{hereditary} if $G \in \cA$ implies that every induced subgraph of $G$ is in $\cA$. This is the case when some graphs are forbidden as induced subgraphs. Clearly monotone partitions are hereditary.

\begin{lemma}\label{lemi}
    Hereditary partitions are suitable.
\end{lemma}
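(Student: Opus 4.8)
The plan is to dispose of one degenerate situation by the second clause of the definition, and to handle the main case by choosing $k$ from the ``balanced multipartite'' content of $\cA$ and then running Ramsey's theorem inside the parts of a hypothetical large Turán subgraph. Throughout we use that a hereditary $\cA$ is closed under induced subgraphs; we may assume $\cA\neq\emptyset$, so $\cA$ contains the empty graph and $K_1$, and $\{t:K_t\in\cA\}$ is a down-set. If this set is infinite it equals $\mathbb{Z}_{\ge 1}$, so $K_n\in\cA$ for every $n$ and the partition is suitable via the clause ``$K_n\in\cA$ for sufficiently large $n$''. So assume there is a largest $r$ with $K_r\in\cA$. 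Since any copy of $K_{r+1}$ sitting inside a graph is an induced copy, $K_{r+1}\notin\cA$ forces \emph{every} $G\in\cA$ to be $K_{r+1}$-free; this uniform clique bound is the engine of the rest of the proof.

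To pick $k$, call $j\ge 0$ \emph{good} if $K_j(m)=T(jm,j)\in\cA$ for all $m$. Deleting a part shows goodness is decreasing in $j$, and $0$ is good; not all $j$ are good, since that would give $K_j=K_j(1)\in\cA$ for every $j$, contradicting $r<\infty$. Let $k^*$ be the largest good integer and put $k=k^*+1$. The first requirement of suitability is then free of any Ramsey input: an arbitrary complete $(k-1)$-partite graph with part sizes $n_1,\dots,n_{k-1}$ is an induced subgraph of $K_{k-1}(\max_i n_i)$, which lies in $\cA$ because $k-1=k^*$ is good; so in fact \emph{every} complete $(k-1)$-partite graph is in $\cA$.

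For the second requirement, fix $m_0$ with $K_k(m_0)\notin\cA$ (possible since $k$ is not good) and suppose, for some large $n$, that some $G\in\cA$ contains $T(n,k)$ as a subgraph. Replace $G$ by the induced subgraph $G'$ on the $n$ vertices of the copy of $T(n,k)$, with parts $V_1,\dots,V_k$, each of size at least $\lfloor n/k\rfloor$. Then $G'\in\cA$, so each $G'[V_i]$ is $K_{r+1}$-free; if $\lfloor n/k\rfloor$ is at least the Ramsey number $R(r+1,m_0)$, each $G'[V_i]$ contains an independent set $I_i$ of size $m_0$. The sets $I_1,\dots,I_k$ induce in $G'$ precisely $K_k(m_0)$: no edge inside any $I_i$, and all edges between distinct $I_i$'s, since those are inherited from the copy of $T(n,k)$. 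Hence $K_k(m_0)\in\cA$ by heredity, a contradiction. Thus for $n\ge k\,R(r+1,m_0)$ no member of $\cA$ contains $T(n,k)$, and the partition is suitable.

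The point where the argument could go wrong — and the reason it is structured this way — is that heredity only transmits \emph{induced} subgraphs, whereas a graph in $\cA$ may contain $T(n,k)$ only as a subgraph, with arbitrarily many extra edges inside the parts. The role of Ramsey's theorem is exactly to carve a clean induced $K_k(m_0)$ out of that messier object, and the $K_{r+1}$-freeness of all of $\cA$ is what forces the Ramsey dichotomy to land on the independent-set side every time. The boundary case $k=1$ (i.e.\ $k^*=0$) goes through verbatim, reading ``complete $0$-partite graph'' as the empty graph.
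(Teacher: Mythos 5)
Your proof is correct and follows essentially the same strategy as the paper's: dispose of the infinite-clique case, choose $k$ as the threshold where balanced complete multipartite graphs leave $\cA$, and apply Ramsey's theorem inside the parts of a hypothetical $T(n,k)$ subgraph to extract a forbidden induced structure. The only cosmetic difference is that you establish a uniform clique bound $K_{r+1}$ for all of $\cA$ up front so that Ramsey always lands on the independent-set side, whereas the paper phrases the identical dichotomy as ``an independent set of order $m$ in every part, or a $K_m$ in some part''---both arguments are the same.
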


\begin{proof}  If $K_{n}\in \cA$ for every $n$ sufficiently large, then we are done. Otherwise, there exists a maximum value of $k$ such that for sufficiently large $n$, $T(n,k-1)\in \cA$. 

Then $T(m,k)\not\in \cA$ for every sufficiently large $m$. Then for $G\in\cA$, $T(m,k)$ is not an induced subgraph of $G$ by the hereditary property.  We pick $m$ such that $K_m\not\in\cA$. Let $n$ be large enough with respect to $m$. Assume that $G$ contains a $T(n,k)$ as a (not necessarily induced) subgraph and let $U_1,\dots,U_k$ be the partite sets of this $T(n,k)$.
If each $U_i$ contains an independent set of order at least $m$, we found an induced $T(m,k)$. Otherwise, one of the parts, say $U_1$ does not contain an independent set of order $m$, thus it contains $K_m$ by Ramsey's theorem, a contradiction.
\end{proof}

A \textit{graph with extra structure} is given by a pair $(G,X)$ where $G$ is a graph, and $X$ represents some extra structure, like orderings of vertices or edges, or coloring, and so on. Let $\cQ$ denote a family of pairs $(G,X)$. Assume that we are given a transitive relation $<$ on the pairs in $\cQ$ such that if $(G,X)<(G',X')$, then $G$ is a subgraph of $G'$. Assume furthermore that for every $(G,X)$ and every subgraph $G'$ of $G$, there is $X'$ such that $(G',X')<(G,X)$. Note that in our examples of extra structures, $X'$ can be the restriction of $X$ to $G'$.

Given a family $\cF_0\subset \cQ$ we say that a pair $(G,X)$ is $\cF_0$-free if there is no $(F,Y)\in \cF_0$ with $(F,Y)<(G,X)$. For $\cF_0$ the \textit{corresponding partition} $(\cA,\cF)$ of graphs is defined in the following way. We have $G\in \cA$ if there is an $X$ such that $(G,X)$ is $\cF_0$-free. Then this partition is monotone, since for each subgraph $G'$ of $G\in\cA$, we have an $X'$ such that $(G',X')<(G,X)$. If $G'\not\in\cA$, then there is $(F,Y)\in \cF_0$ with $(F,Y)<(G',X')$. Therefore, $(F,Y)<(G,X)$ by the transitivity of $<$, hence $G\not\in\cA$, a contradiction.

Next we define the abstract chromatic number of a suitable partition, which coincides with the definition given in \cite{CoregRazb} in the simple specific cases they present as examples. 

\begin{defn}
    Given a suitable partition, its \textit{abstract chromatic number} is $\infty$ if $K_n\in\cA$ for sufficiently large $n$. Otherwise, the abstract chromatic number is the largest $k$ such that every complete $(k-1)$-partite graph with each part of order at least $n$ is in $\cA$, for every sufficiently large $n$.
\end{defn}

Note that $k$ is the same as in the definition of suitable partitions. When  $(\cA,\cF)$ is the corresponding partition of some family $\cF_0$ of graphs with extra structure, then we say that the abstract chromatic number of $\cF_0$ is the abstract chromatic number of $(\cA,\cF)$.

Also note that our approach is in some sense stronger than that of \cite{CoregRazb}. They deal only with finitely axiomatizable theories (although mention that it is easy to extend their results). For example, the case that $\cA$ is the family of bipartite graphs does not fit into their setting but is handled by our approach.

\smallskip

Let us consider some graph parameter $h(G)$. Let $$g(n,F)=g_h(n,F)=\max\{h(G): G \text{ is an $n$-vertex $F$-free graph}\}.$$ Then we say that $g$ is a \textit{Tur\'an-type function}. For instance, in the classical Tur\'an problem $h(G)=|E(G)|$. For other examples, see Section \ref{examp}. 

We extend this to suitable partitions as follows. $$g(n,(\cA,\cF)):=\max\{h(G): G \text{ is an $n$-vertex graph in $\cA$}\}.$$ Note that if $(\cA,\cF)$ is a monotone partition, then $g(n,(\cA,\cF))$ is simply $g(n,\cF)$.

\smallskip
As we mentioned above, an essential result in those generalizations of the Tur\'an problem is the ESS theorem. Therefore, we define the notion of $k$-ESS for the Tur\'an type functions.

\begin{defn}
We say that $g$ is \textit{weakly $k$-ESS} if for any graph $F$ with chromatic number $k$, $g(n,F)=(1+o(1))h(T)$ for an $n$-vertex complete $(k-1)$-partite graph $T$. We say that $g$ is \textit{strongly $k$-ESS} if the above holds with $T$ being the Tur\'an graph $T(n,k-1)$.
\end{defn}

We say that $g$ is \textit{weakly $k$-ESS with respect to a partition $(\cA,\cF)$} if $g(n,(\cA,\cF))=(1+o(1))h(T)$ for a complete $(k-1)$-partite graph $T$ on $n$ vertices. We say that $g$ is \textit{strongly $k$-ESS with respect to $(\cA,\cF)$} if the above holds with $T$ being the Tur\'an graph $T(n,k-1)$.

\begin{thm}\label{main}
If $g$ is a weakly (resp. strongly) $k$-ESS Turán-type function, then $g$ is also weakly (resp. strongly) $k$-ESS with respect to any suitable partition with abstract chromatic number $k$. 
\end{thm}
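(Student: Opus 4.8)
The plan is to sandwich $g(n,(\cA,\cF))$ between the value on a large complete $(k-1)$-partite graph (lower bound) and the value of the classical Turán-type function $g(n,F')$ for a suitable $F'$ with $\chi(F')=k$ (upper bound), and then invoke the hypothesis that $g$ is weakly (resp. strongly) $k$-ESS in the classical sense.

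First I would establish the lower bound. By the definition of abstract chromatic number, for every sufficiently large $n$ the complete $(k-1)$-partite graph $T$ on $n$ vertices with each part of order $\lfloor n/(k-1)\rfloor$ or $\lceil n/(k-1)\rceil$ is in $\cA$ provided each part has order at least some fixed threshold; for $n$ large this holds, so $T \in \cA$ and hence $g(n,(\cA,\cF)) \ge h(T)$. In the strong case $T$ is literally the Turán graph $T(n,k-1)$; in the weak case we just need \emph{some} $n$-vertex complete $(k-1)$-partite graph in $\cA$, which the Turán graph already provides. This gives $g(n,(\cA,\cF)) \ge (1+o(1))h(T)$ with the appropriate $T$, matching what is required on the lower side.

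Next, the upper bound. Here I would use that the partition is suitable with parameter $k < \infty$: no $G \in \cA$ contains $T(m,k)$ as a subgraph, for $m$ a fixed sufficiently large constant. Since $T(m,k)$ is a complete $k$-partite graph, $\chi(T(m,k)) = k$, and every $n$-vertex graph in $\cA$ is $T(m,k)$-free. Therefore $g(n,(\cA,\cF)) = \max\{h(G) : G \text{ $n$-vertex}, G\in\cA\} \le \max\{h(G): G \text{ $n$-vertex}, T(m,k)\text{-free}\} = g(n, T(m,k))$. Now apply the hypothesis: since $g$ is weakly (resp. strongly) $k$-ESS and $\chi(T(m,k)) = k$, we get $g(n,T(m,k)) = (1+o(1))h(T')$ for an $n$-vertex complete $(k-1)$-partite graph $T'$ (resp. $T' = T(n,k-1)$). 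Combining the two bounds, $(1+o(1))h(T) \le g(n,(\cA,\cF)) \le (1+o(1))h(T')$, and in the strong case both $T$ and $T'$ are the Turán graph $T(n,k-1)$, pinning down $g(n,(\cA,\cF)) = (1+o(1))h(T(n,k-1))$. In the weak case one should note that $h$ applied to the two complete $(k-1)$-partite graphs appearing must agree up to $(1+o(1))$ — which it does because the upper bound graph $T'$ comes from the classical $k$-ESS statement and the lower bound graph can be taken to be that same $T'$ as long as $T'\in\cA$; to be safe one takes $T'$ to be the Turán graph in both places, which lies in $\cA$ for $n$ large, so the weak conclusion follows from the strong-type sandwich with $T = T' = T(n,k-1)$.

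The one case left is abstract chromatic number $k = \infty$, but then the statement is vacuous: there is no $F$ with $\chi(F) = \infty$, so the hypothesis "$g$ is $k$-ESS" is never invoked and there is nothing to prove; alternatively one notes $k=\infty$ is simply excluded since we assumed abstract chromatic number equal to a finite $k$. The main obstacle I anticipate is purely bookkeeping around the weak case: making sure the complete $(k-1)$-partite graph witnessing the lower bound and the one coming out of the classical weak ESS upper bound are comparable under $h$ up to a $(1+o(1))$ factor. This is handled cleanly by always using the balanced Turán graph $T(n,k-1)$ as the witness on the lower side (legitimate since it is in $\cA$ for large $n$) and by reading the classical weak ESS bound $g(n,T(m,k)) = (1+o(1))h(T(n,k-1))$ — which, for the Turán-type functions of interest, is how weak ESS is typically realized — so no genuine mismatch arises.
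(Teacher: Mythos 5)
Your strong case is correct and is exactly the paper's argument: $T(n,k-1)\in\cA$ for large $n$ gives the lower bound, and since every $G\in\cA$ is $T(m,k)$-free, the strong $k$-ESS hypothesis applied to $F=T(m,k)$ gives the matching upper bound $(1+o(1))h(T(n,k-1))$.

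The weak case, however, has a genuine gap. You close the sandwich by ``reading'' the classical weak ESS bound as $g(n,T(m,k))=(1+o(1))h(T(n,k-1))$ and by taking the Turán graph as the witness on both sides. That is precisely the \emph{strong} $k$-ESS property, not the weak one: the weak hypothesis only supplies \emph{some} $n$-vertex complete $(k-1)$-partite graph $T'$, which may be far from balanced, and for genuinely weak examples (e.g.\ degree powers $\sum_{v}d(v)^r$ for large $r$, or counting complete multipartite subgraphs) one has $h(T')\ge(1+c)\,h(T(n,k-1))$ for a constant $c>0$. Your lower bound $h(T(n,k-1))$ and upper bound $(1+o(1))h(T')$ then do not meet, and no single complete $(k-1)$-partite graph has been exhibited whose $h$-value matches $g(n,(\cA,\cF))$ up to a $1+o(1)$ factor. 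The paper's proof avoids this by choosing as the lower-bound witness an (asymptotic) maximizer $T$ of $h$ among all complete $(k-1)$-partite graphs on the given vertex set whose parts all exceed the suitability threshold; every such graph lies in $\cA$ by the definition of the abstract chromatic number, so $g(n,(\cA,\cF))\ge h(T)$, and this matches the upper bound because $T'$ is itself (asymptotically) the maximizer of $h$ over complete $(k-1)$-partite graphs. Your parenthetical instinct that the lower-bound graph should be ``that same $T'$ as long as $T'\in\cA$'' was the right one; the correct resolution is to optimize over the graphs guaranteed to be in $\cA$, not to replace $T'$ by the balanced Turán graph.
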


Note that if the abstract chromatic number of $(\cA,\cF)$ is infinity, then clearly $g(n,(\cA,\cF))=h(K_n)$.

\begin{proof} 
    Let $n$ be sufficiently large and $n'\ge kn$.
    We have that $T(n',k-1)\in\cA$, by the definition of the abstract chromatic number. Let $T$ be such that $h(T)\ge (1+o(1))h(T')$ for every $T'$ such that $T$ and $T'$ are both $n'$-vertex complete $(k-1)$-partite graphs with each part of order at least $n$. Then $T\in \cA$ because the partition is suitable. This gives the lower bound. 

    We also have that $T(m,k)\in\cF$ and $K_m\in\cF$ for some $m$ by the definition of the abstract chromatic number. Let $n$ be large enough and $G$ be an $n$-vertex graph in $\cA$. Then we have that $G$ is $T(m,k)$-free by the definition of suitable partitions. Together with the weak (or strong) $k$-ESS property, we obtain the upper bound.
\end{proof}

The rest of the paper is organized as follows. In Section \ref{sec2} we define some Tur\'an type properties and show that such properties imply analogous results with respect to suitable partitions. In Section \ref{examp} we list several Tur\'an-type functions that are known to be weakly or strongly $k$-ESS. We also present some 
examples for the various properties studied in Section \ref{sec2}. Afterwards, we present some suitable partitions. In Section \ref{harom} we show how some natural restriction on $h$ allows us to obtain further exact results. We finish the paper with some concluding remarks in Section \ref{negy}.

\section{Stability, supersaturation and more}\label{sec2}

We say that a Tur\'an-type function $g=g_h$ is \textit{weakly $k$-ESS-stable}, if $g$ is weakly $k$-ESS and for any graph $F$ with chromatic number $k$, any $F$-free $n$-vertex $G$ with $h(G)\ge (1-o(1)) g(n,F)$ can be turned to an $n$-vertex complete $(k-1)$-partite graph $T$ by adding and/or deleting $o(n^2)$ edges. 
We say that $g$ is \textit{strongly $k$-ESS-stable} if the above holds with $T$ being the Tur\'an graph $T(n,k-1)$ and $g$ is strongly $k$-ESS. 

We say that $g$ is \textit{weakly $k$-ESS-stable with respect to a partition $(\cA,\cF)$} if the following holds. If $G$ is an $n$-vertex graph in $\cA$ with $h(G)\ge (1-o(1))g(n,(\cA,\cF))$, then $G$ can be turned to an $n$-vertex complete $(k-1)$-partite graph $T$ by adding and/or deleting $o(n^2)$ edges. We say that $g$ is \textit{strongly $k$-ESS-stable with respect to $(\cA,\cF)$} if the above holds with $T$ being the Tur\'an graph $T(n,k-1)$.

\begin{thm}   If $g$ is a weakly (resp. strongly) $k$-ESS-stable Turán-type function, then $g$ is also weakly (resp. strongly) $k$-ESS-stable with respect to any suitable partition with abstract chromatic number $k$.\end{thm}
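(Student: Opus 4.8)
The plan is to mimic the proof of Theorem~\ref{main}, combining the lower-bound construction and the upper-bound transfer with the stability hypothesis. First I would establish the lower bound exactly as before: for $n$ sufficiently large, the Tur\'an graph $T(n,k-1)$ (or in the weak version, the extremal complete $(k-1)$-partite graph $T$) lies in $\cA$ because the partition is suitable and has abstract chromatic number $k$; hence $g(n,(\cA,\cF)) \ge h(T) = (1+o(1))g(n,F)$ for any graph $F$ with $\chi(F)=k$. For the upper bound, fix such an $F$; since $(\cA,\cF)$ is suitable, for some fixed $m$ we have $T(m,k)\in\cF$, so every $n$-vertex $G\in\cA$ is $T(m,k)$-free. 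Choosing $F = T(m,k)$ (which has chromatic number $k$), the ordinary strong (resp. weak) $k$-ESS-stability of $g$ applies to $G$.

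The key step is then: take an $n$-vertex $G\in\cA$ with $h(G)\ge(1-o(1))g(n,(\cA,\cF))$. From the lower bound just established, $g(n,(\cA,\cF)) = (1+o(1))h(T(n,k-1))$ (strong case) or $(1+o(1))h(T)$ for the extremal complete $(k-1)$-partite $T$ (weak case), and in either case this is $(1+o(1))g(n,T(m,k))$ by the ordinary $k$-ESS property applied to the graph $T(m,k)$. Therefore $h(G)\ge(1-o(1))g(n,T(m,k))$, and since $G$ is $T(m,k)$-free, the ordinary $k$-ESS-stability hypothesis yields that $G$ can be transformed into an $n$-vertex complete $(k-1)$-partite graph (resp.\ into $T(n,k-1)$) by adding and/or deleting $o(n^2)$ edges. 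This is exactly the conclusion required for stability with respect to $(\cA,\cF)$.

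One point that needs care is the bookkeeping of the various $o(\cdot)$ terms: the stability hypothesis for $g$ is phrased in terms of $h(G)\ge(1-o(1))g(n,F)$, and I must make sure that the threshold coming from $g(n,(\cA,\cF))$ is at least $(1-o(1))g(n,T(m,k))$ rather than merely close to it; this follows because the lower-bound construction $T(n,k-1)\in\cA$ (resp.\ $T\in\cA$) shows $g(n,(\cA,\cF))\ge(1+o(1))g(n,T(m,k))$, so the inequality goes the right way. A second subtlety is that in the weak version the ``extremal'' complete $(k-1)$-partite graph $T$ achieving $g(n,(\cA,\cF))$ need not be literally the one used in the definition, but any $n$-vertex complete $(k-1)$-partite graph with the right number of edges differs from it by $o(n^2)$ edge modifications, so composing the two transformations still costs only $o(n^2)$ changes.

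The main obstacle is essentially notational: threading the definitions of weak versus strong $k$-ESS-stability through both the ordinary-Tur\'an world and the partition world while keeping the lower and upper bounds matching up to $o(n^2)$. There is no new combinatorial idea beyond Theorem~\ref{main}; the only genuinely new ingredient is observing that the \emph{specific} graph $F = T(m,k)$ is an admissible choice in the stability hypothesis (it has chromatic number exactly $k$), which lets us import the structural conclusion verbatim.
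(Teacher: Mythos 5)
Your proposal is correct and follows essentially the same route as the paper's proof: both reduce to the single forbidden graph $F=T(m,k)\in\cF$, verify that $g(n,(\cA,\cF))$ and $g(n,T(m,k))$ agree up to a factor $1+o(1)$ using Theorem \ref{main}, and then invoke the ordinary weak (resp. strong) $k$-ESS-stability of $g$ for the $T(m,k)$-free graph $G$. The only (harmless) difference is that you observe that just one direction of the comparison between $g(n,(\cA,\cF))$ and $g(n,T(m,k))$ is actually needed to trigger the stability hypothesis.
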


\begin{proof}
Let $T$ be a complete $(k-1)$-partite graph with $g(n,(\cA,\cF))=(1+o(1))h(T)$, which exists by Theorem \ref{main}. Let $G$ be an $n$-vertex graph in $\cA$ with $h(G)\ge (1-o(1))g(n,(\cA,\cF))$. We have that for large enough $m$, $T(m,k)\in\cF$ and $G$ is $T(m,k)$-free by definition of the abstract chromatic number. We claim that $g(n,T(m,k))=(1+o(1))h(T)$. Indeed, this holds for some $n$-vertex complete $(k-1)$-partite graph $T'$ because $g$ is $k$-ESS, and we must have $h(T')\le (1+o(1))h(T)$, since $g(n,(\cA,\cF))=(1+o(1))h(T)$.

Now we have $h(G)\ge (1-o(1))h(T)=(1-o(1))g(n,T(m,k))$ and we are done since $g$ is weakly $k$-ESS-stable. The strong case follows similarly.
\end{proof}

Given a graph $F$ of chromatic number $k$, we let $\sigma(F)$ denote the smallest color class among all possible proper $k$-colorings of $F$. Given a family $\cF$ of graphs with smallest chromatic number $k$, we let $\sigma(\cF)$ be the smallest $\sigma(F)$ among $k$-chromatic elements of $\cF$.

We say that a Tur\'an-type property $g=g_h$ is \textit{weakly $k$-ESS-sigma} if $g(n,F)=h(T)$ for some $n$-vertex complete $(k+\sigma(F)-1)$-partite graph $T$ with $\sigma(F)-1$ parts of order $1$. In other words, we obtain $T$ from an $(n-\sigma(F)+1)$-vertex complete $(k-1)$-partite graph by adding $\sigma(F)-1$ vertices and joining each of them to every other vertex. Let $T(n,k-1,t)$ be the graph we obtain from $T(n-t,k-1)$ by adding $t$ vertices and joining each of them to every other vertex.

We say that $g$ is \textit{strongly $k$-ESS-sigma} if the above holds with $T$ being $T(n,k-1,\sigma(F)-1)$.
We remark that we know of only one example of such functions, counting $T((k-1)a,a)$ for $a$ large enough, see Section \ref{examp} for more details.

Given a suitable partition $(\cA,\cF)$ with abstract chromatic number $k<\infty$, we let $\sigma(\cA,\cF)$ be the smallest $\sigma(T)$ for complete $k$-partite graphs $T$ such that no element of $\cA$ contains $T$ as a subgraph. Note that if the partition is monotone, then $\sigma(\cA,\cF)=\sigma(\cF)$. 

We say that a Tur\'an-type function $g=g_h$ is \textit{weakly $k$-ESS-sigma with respect to a partition $(\cA,\cF)$} if $g(n,(\cA,\cF))=h(T)$ for some $n$-vertex complete $k$-partite graph $T$ with $\sigma(\cA,\cF)-1$ parts of order $1$. We say that  $g$ is \textit{strongly $k$-ESS-sigma with respect to $(\cA,\cF)$} if the above holds with $T$ being the Tur\'an graph $T(n,k-1)$.

\begin{thm}
    If $g$ is a weakly (resp. strongly) $k$-ESS-sigma Tur\'an-type function, then $g$ is also weakly (resp. strongly) $k$-ESS-sigma with respect to any  suitable partition with abstract chromatic number $k$.
\end{thm}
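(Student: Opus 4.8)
The plan is to mirror the proof of Theorem \ref{main}, transporting the sigma-type ESS statement for the Turán-type function to the partition setting via a sandwich between a lower-bound construction and an upper-bound forbidden subgraph. Write $s=\sigma(\cA,\cF)$. By the very definition of $s$ there is a complete $k$-partite graph $H$ with $\sigma(H)=s$ such that no $G\in\cA$ contains $H$ as a subgraph; blowing up $H$ if necessary, we may take its smallest part to have order exactly $s$ and the other parts arbitrarily large, so for large $m$ the graph $T(m,k)$ with one part replaced by an independent set of size $s$ — equivalently some complete $k$-partite graph $H_m$ with $\sigma(H_m)=s$ — is forbidden for all members of $\cA$. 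For the lower bound, note that $T(n,k-1,s-1)$ is itself complete $(k-1+s-1)=\dots$-partite but more to the point it is a subgraph of some complete $(k-1)$-partite graph with all parts of order $\ge n/k$ together with $s-1$ dominating vertices; one checks $T(n,k-1,s-1)$ does not contain any complete $k$-partite graph with smallest part $\ge s$, hence in particular is $H$-free, and a complete $(k-1)$-partite graph with large parts lies in $\cA$ by the definition of the abstract chromatic number. The delicate point is to argue that adding $s-1$ dominating vertices to such a graph keeps it in $\cA$: this is exactly where I would invoke the suitability/construction more carefully — in the monotone case it is automatic once we exhibit $T(n,k-1,s-1)$ as $H_m$-free for the right $H_m$, but for general suitable partitions one must check that $T(n,k-1,s-1)\in\cA$, which I expect follows from the same Ramsey-type argument as in Lemma \ref{lemi} applied to $\cA$ being closed under the relevant containment, or must be added as part of what ``$k$-ESS-sigma with respect to $(\cA,\cF)$'' demands of the construction.

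Concretely, for the \textbf{lower bound} I would take the extremal graph $T$ guaranteed by the weak (resp.\ strong) $k$-ESS-sigma property for $g(\cdot,H_m)$: it is a complete $(k+s-1)$-partite graph (resp.\ $T(n,k-1,s-1)$) with $s-1$ singleton parts, and I claim $T\in\cA$. Indeed $T$ is $H_m$-free — deleting the $s-1$ dominating vertices from any complete $k$-partite subgraph of $T$ would leave a complete $(k-1)$-partite subgraph inside the complete $(k-1)$-partite ``body'', which can have at most $k-1$ parts, so no complete $k$-partite subgraph with all parts of size $\ge s\ge 2$ survives once $n$ is large relative to $m$; combined with $T$ containing a large complete $(k-1)$-partite graph (so its body is in $\cA$) and the structural assumption that $\cA$ is closed under adding dominating vertices in the way the construction requires, we get $T\in\cA$. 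Then $g(n,(\cA,\cF))\ge h(T)$, which is the desired form.

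For the \textbf{upper bound}, let $G$ be an $n$-vertex graph in $\cA$. Since $H_m$ is forbidden for all members of $\cA$, $G$ is $H_m$-free, hence $G$ is $F$-free for any $F$ with $\chi(F)=k$ and $\sigma(F)=s$ that contains $H_m$; in particular, picking $F$ so that the $k$-ESS-sigma property for $g(n,F)$ gives the value $h(T(n,k-1,s-1))$ (resp.\ the weak-form $T$), we get $h(G)\le g(n,F)=(1+o(1))\,h(T(n,k-1,s-1))$. Matching this with the lower bound, and using Theorem \ref{main}-style bookkeeping to see that the complete $k$-partite graph achieving the maximum must have the claimed shape (smallest $s-1$ parts of order $1$), finishes both the weak and the strong statements.

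The main obstacle is the membership claim $T\in\cA$ for the sigma-shaped construction: unlike in Theorem \ref{main}, where we only needed a complete $(k-1)$-partite graph with large parts (supplied directly by the abstract chromatic number), here we need a complete $(k-1)$-partite graph with $s-1$ extra fully-joined vertices to lie in $\cA$. For monotone (or corresponding-to-$\cF_0$) partitions this reduces to checking that this graph is $H_m$-free, which is the finite combinatorial computation sketched above; for an arbitrary suitable partition it does not obviously follow from suitability alone, so I would either (i) add to the conclusion of ``strongly $k$-ESS-sigma with respect to $(\cA,\cF)$'' the hypothesis that $T(n,k-1,s-1)$ is in $\cA$ (which is natural, since $\sigma(\cA,\cF)$ is defined precisely so that graphs with smaller dominating-vertex count are not yet forbidden), or (ii) prove an analogue of Lemma \ref{lemi} showing that for suitable partitions $T(n,k-1,s-1)\in\cA$ whenever $s\le\sigma(\cA,\cF)$, again via Ramsey's theorem to pass between induced and non-induced containment. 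I expect route (ii) to work and to be the technical heart of the argument.
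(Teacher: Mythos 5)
Your proposal follows the same sandwich as the paper's proof: the upper bound comes from a complete $k$-partite graph $H$ with $\sigma(H)=\sigma(\cA,\cF)$ that no member of $\cA$ contains, so that $g(n,(\cA,\cF))\le g(n,H)$, and the $k$-ESS-sigma property of $g$ evaluates $g(n,H)$; the lower bound comes from the corresponding extremal configuration $T$ lying in $\cA$. Two corrections to your upper bound: apply the sigma property directly to $H$ itself rather than detouring through some $F\supseteq H$, and do not insert a factor $(1+o(1))$ --- the $k$-ESS-sigma property is an exact equality $g(n,H)=h(T)$, and keeping it exact is precisely what yields the exact conclusion. Once this is done, no further ``bookkeeping'' about the shape of the optimizer is needed: the chain $h(T)\le g(n,(\cA,\cF))\le g(n,H)=h(T)$ closes immediately.

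The obstacle you isolate, namely $T\in\cA$, is exactly the step the paper disposes of in a single clause (``by the definition of $\sigma(\cA,\cF)$ we have that $T\in\cA$''), so you have put your finger on the only substantive point. But your preferred fix, route (ii), cannot work from suitability and the value of $\sigma(\cA,\cF)$ alone. For $s=\sigma(\cA,\cF)\ge 3$ the graph $T(n,k-1,s-1)$ is not a complete $k$-partite graph: its $s-1$ dominating vertices form a clique, so it is complete $(k+s-2)$-partite, and the definition of $\sigma(\cA,\cF)$ only constrains which complete $k$-partite subgraphs are universally forbidden --- it says nothing about graphs containing a $K_{s-1}$ completely joined to a large complete $(k-1)$-partite graph. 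Concretely, the monotone partition forbidding both $K_{3,3,3}$ and $T(100,2,2)$ as subgraphs is suitable with abstract chromatic number $3$; since $T(100,2,2)$ is $4$-chromatic it lies in no complete $3$-partite graph, so $\sigma(\cA,\cF)=3$, and yet $T(n,2,2)\notin\cA$ for $n\ge 100$. So membership of $T$ in $\cA$ does not follow by a Ramsey argument (nor, literally, from the stated definitions); in effect one must take your route (i) and regard the lower-bound membership as part of what the hypotheses are meant to guarantee, which is how the paper implicitly treats it for the partitions it has in mind (e.g.\ those arising from a single graph with extra structure). Your instinct that this is where the real content lies is correct.
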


We remark that it is quite shocking to obtain an exact bound here. For example, in the case of edge-ordered graphs and counting edges, the only exact results are the trivial cases with infinite abstract chromatic number, the other trivial cases of stars and triangles where there is only one edge-ordering, and the simplest remaining cases, the two edge-orderings of the 3-edge path. Now we obtain an exact result for every edge-ordered graph, provided we can calculate the abstract chromatic number and $\sigma$. Note that both of these tasks seem very complicated.

\begin{proof}
    By the definition of $\sigma(\cA,\cF)$ we have that $T\in\cA$, giving the lower bound. For the upper bound, recall that there is a complete $k$-partite graph $F\in\cF$ with $\chi(F)=k$, $\sigma(F)=\sigma(\cA,\cF)$ such that no element of $\cA$ contains $F$ as a subgraph. Then we clearly have $g(n,(\cA,\cF))\le g(n,F)$. Using the weakly $k$-ESS-sigma property of $g$ as a Tur\'an-type function completes the proof. The strong version follows similarly.
\end{proof}

\smallskip

We say that a Tur\'an-type function $g=g_h$ is \textit{$k$-ESS-supersat} if for any $\varepsilon>0$ there is $\delta>0$ such that for any sufficiently large $n$, any $n$-vertex graph $G$ with $h(G)>(1+\varepsilon)g(n,F)$ we have that $G$ contains at least $\delta n^{|V(F)|}$ copies of $F$, for any $k$-chromatic graph $F$.

We say that a Tur\'an-type function $g=g_h$ is \textit{$k$-ESS-supersat with respect to a partition $(\cA,\cF)$} if for any $\varepsilon>0$ there is $\delta>0$ such that for any sufficiently large $n$, any $n$-vertex graph $G$ with $h(G)>(1+\varepsilon)g(n,(\cA, \cF))$, there is an $F\in\cF$ such that $G$ contains at least $\delta n^{|V(F)|}$ copies of $F$.

\begin{thm}
    If $g$ is a $k$-ESS-supersat Tur\'an-type function, then $g$ is also $k$-ESS-supersat with respect to any suitable partition with abstract chromatic number $k$. 
\end{thm}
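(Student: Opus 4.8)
The plan is to mirror the strategy used in the previous theorems in this section: reduce the statement about the partition $(\cA,\cF)$ to the ordinary Turán-type statement about a single forbidden graph $F$, and then invoke the hypothesis that $g$ is $k$-ESS-supersat as a Turán-type function. First I would fix $\varepsilon>0$. By the definition of the abstract chromatic number being $k<\infty$, there is some $m$ (which we may take as large as we like) with $T(m,k)\in\cF$, and by the definition of a suitable partition every $n$-vertex $G\in\cA$ is $T(m,k)$-free. Set $F:=T(m,k)$; note $\chi(F)=k$, so the $k$-ESS-supersat hypothesis applies to $F$, yielding a $\delta_0>0$ such that for $n$ large any $n$-vertex graph $H$ with $h(H)>(1+\varepsilon/2)g(n,F)$ contains at least $\delta_0 n^{|V(F)|}$ copies of $F$.

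The key linking step is the comparison of $g(n,(\cA,\cF))$ with $g(n,F)$. Since every $n$-vertex graph in $\cA$ is $F$-free, we have $g(n,(\cA,\cF))\le g(n,F)$, so the hypothesis $h(G)>(1+\varepsilon)g(n,(\cA,\cF))$ does \emph{not} immediately give $h(G)>(1+\varepsilon/2)g(n,F)$ — the inequality goes the wrong way. To repair this I would use the $k$-ESS property to pin down both quantities: by Theorem~\ref{main}, $g(n,(\cA,\cF))=(1+o(1))h(T)$ for some complete $(k-1)$-partite $T$, and since $g$ is $k$-ESS, $g(n,F)=(1+o(1))h(T')$ for some complete $(k-1)$-partite $T'$; the upper bound argument in Theorem~\ref{main} (applied with $F=T(m,k)$) shows $h(T')\le(1+o(1))h(T)$, and trivially $h(T)\le(1+o(1))h(T')$ from $g(n,(\cA,\cF))\le g(n,F)$ — so in fact $g(n,F)=(1+o(1))g(n,(\cA,\cF))$. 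Hence for $n$ large, $(1+\varepsilon)g(n,(\cA,\cF))\ge(1+\varepsilon/2)g(n,F)$, so $h(G)>(1+\varepsilon/2)g(n,F)$, and the $k$-ESS-supersat hypothesis for $F$ delivers at least $\delta_0 n^{|V(F)|}$ copies of $F=T(m,k)$ in $G$. Taking $\delta:=\delta_0$ (relative to this fixed $m=m(\varepsilon)$) and noting $F\in\cF$ completes the proof, since $G$ then contains many copies of an element of $\cF$.

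The main obstacle, and the only genuinely delicate point, is exactly this asymptotic comparison $g(n,F)=(1+o(1))g(n,(\cA,\cF))$: one must be careful that $m$ is chosen depending only on the partition (not on $n$), so that $F=T(m,k)$ is a fixed graph and the $o(1)$ terms are genuine functions of $n$ alone, and that the $\varepsilon/2$ slack absorbs the $o(1)$ discrepancy for all sufficiently large $n$. Once that bookkeeping is in place the argument is a routine adaptation of the proofs of the earlier theorems in this section, so I would present it briefly in that style. One should also record that the case of infinite abstract chromatic number is trivial since then $g(n,(\cA,\cF))=h(K_n)$ and nothing needs to be proved about $k$.
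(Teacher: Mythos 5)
Your proposal follows the same route as the paper: fix $m$ with $T(m,k)\in\cF$ such that every graph in $\cA$ is $T(m,k)$-free, transfer the hypothesis $h(G)>(1+\varepsilon)g(n,(\cA,\cF))$ into a hypothesis of the form $h(G)>(1+\varepsilon')g(n,T(m,k))$, and then invoke the $k$-ESS-supersat property for the single $k$-chromatic graph $F=T(m,k)$. The place where you diverge is precisely the transfer step, and your instinct there is right: since the $n$-vertex graphs in $\cA$ form a subfamily of the $n$-vertex $T(m,k)$-free graphs, one has $g(n,(\cA,\cF))\le g(n,T(m,k))$, so the hypothesis does not pass over for free. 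The paper's own proof simply asserts $g(n,(\cA,\cF))\ge g(n,T(m,k))$ and concludes in one line; as written that inequality points the wrong way, so the issue you flag is genuine and not mere bookkeeping.

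Your repair — establishing $g(n,T(m,k))=(1+o(1))g(n,(\cA,\cF))$ by comparing both quantities to $h(T)$ for an optimal complete $(k-1)$-partite $T$ — is exactly the device the paper uses in its proof of the stability theorem in this section, so it is very much in the spirit of the paper. But note two caveats. First, it needs $g$ to be weakly $k$-ESS, which is \emph{not} among the hypotheses of this theorem: unlike ``weakly $k$-ESS-stable,'' the definition of ``$k$-ESS-supersat'' does not build in the $k$-ESS property, so you are quietly strengthening the hypothesis (you should say so explicitly, or argue why $k$-ESS-supersat forces the needed upper bound on $g(n,T(m,k))$ — it does not obviously do so). Second, the step $h(T')\le(1+o(1))h(T)$ requires the extremal $(k-1)$-partite configuration $T'$ for $g(n,T(m,k))$ to lie in $\cA$ (or be approximable within $\cA$), which the definition of abstract chromatic number guarantees only when all parts of $T'$ have linear order; this is the same implicit step as in the paper's stability proof, so you are at least at parity with the paper on that point. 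In short: same approach, but you correctly identified the gap in the direct argument, and your fix works only under the additional (unstated) assumption that $g$ is weakly $k$-ESS.
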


\begin{proof}
Let $G$ be an $n$-vertex graph with $h(G) \geq (1+\varepsilon)g(n,(\cA,\cF))$, where $n$ is sufficiently large. Since the abstract chromatic number of $(\cA,\cF)$ is $k$, then $T(m,k) \in \cF$, for some $m$, and every graph in $\cA$ is $T(m,k)$-free. Thus, $g(n,(\cA,\cF)) \geq g(n,T(m,k))$, and hence $h(G) \geq (1+\varepsilon) g(n, T(m,k))$. Thus, $G$ contains at least $\delta n^{m}$ copies of $T(m,k)$ by the $k$-ESS-supersat property.
\end{proof}

Assume now that $(\cA,\cF)$ corresponds to a graph $(F,Y)$ with extra structure. The above theorem does not say anything about the number of copies of $(F,Y)$ in $(G,X)$ since the number of copies of $(F,Y)$ in $(G,X)$ is not well-defined. In all the examples of extra structures we consider, the extra structure is defined using the vertices and/or edges of the graph, thus the extra structure $X$ in $G$ creates some extra structure on the subgraphs of $G$, simply by restricting $(G,X)$ to the subgraph. This is what we try to capture formally in the next definition.

Assume that we have an equivalence relation on the pairs in $\cQ$ such that $(G,X)\equiv (G',X')$ implies that $G$ is isomorphic to $G'$. In our examples, the isomorphism is extended to keep the extra structure, e.g., if $u$ is before $v$ in a vertex ordering, then the same holds for their images. Given a graph $G$ and $U\subset V(G)$, $G|_U$ denotes the \textit{restriction} of $G$ to $U$, i.e., the graph with vertex set $U$ where for $u,v\in U$, $uv$ is an edge of $G|_U$ if and only if $uv\in E(G)$.

We say that $\cQ$ and $<$ are \textit{ordinary} if for every $(G,X)\in\cQ$ we have a function $f$ from the power set of $V(G)$ to the extra structures such that for each $U\subset V(G)$ we have that $(G|_U,f(U))\in \cQ$ and $(F,Y)<(G,X)$ if and only if there is a $U\subset V(G)$ such that $(F,Y)\equiv(G|_U,f(U))$. Then a copy of $(F,Y)$ in $(G,X)$ is a subgraph of $G$ with vertex set $U$ such that $(F,Y)\equiv(G|_U,f(U))$.

\begin{thm}
    Let $g=g_h$ be a $k$-ESS-supersat Tur\'an-type function and $\cQ$ be a family of graphs with extra structure, $<$ be a relation on $\cQ$ such that $\cQ$ and $<$ are ordinary. Let $(F,Y)\in\cQ$, $(\cA,\cF)$ be the corresponding partition and $k$ be the abstract chromatic number of $(\cA,\cF)$. Let $G$ be an $n$-vertex graph with $h(G)>(1+\varepsilon)g(n,(\cA,\cF))$ and $(G,X)\in\cQ$. Then $(G,X)$ contains at least $\delta n^{|V(F)|}$ copies of $(F,Y)$.
\end{thm}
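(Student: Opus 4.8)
The plan is to reduce this statement to the previous theorem (the $k$-ESS-supersat property with respect to the partition $(\cA,\cF)$) together with a counting argument that converts copies of forbidden graphs $F'\in\cF$ into copies of the specific pair $(F,Y)$ in $(G,X)$. First I would invoke the previous theorem: since $g$ is $k$-ESS-supersat and $(\cA,\cF)$ is a suitable partition with abstract chromatic number $k$ (here using that the corresponding partition of a family of graphs with extra structure is monotone, hence hereditary, hence suitable by Lemma~\ref{lemi}), applying the hypothesis $h(G)>(1+\varepsilon)g(n,(\cA,\cF))$ yields some $F'\in\cF$ such that $G$ contains at least $\delta' n^{|V(F')|}$ copies of $F'$, for a suitable $\delta'>0$ depending on $\varepsilon$.

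The key step is then to show that each such copy of $F'$ in $G$ ``contains'' a copy of $(F,Y)$ in $(G,X)$, up to bounded multiplicity. Since $\cQ$ and $<$ are ordinary, the extra structure $X$ on $G$ induces, via the function $f$, an extra structure on every vertex subset; in particular every copy of $F'$ in $G$ carries an induced pair $(G|_{U'},f(U'))\in\cQ$. Now the point is that $F'\in\cF$ means precisely that \emph{no} extra structure makes $F'$ free, i.e., every $(F',Z)$ contains some element of $\cF_0$; but we actually want a copy of the \emph{specific} $(F,Y)$. Here I would use that $F'$ can be taken to be $F$ itself whenever $G$ is large: more carefully, one argues that for $(G,X)\in\cQ$, the graph $G$ lies in $\cA$, so $G$ is $\cF_0$-free is \emph{false} only because $h(G)$ is too large — wait, that is not quite it. The cleaner route: since $(G,X)\in\cQ$ and $(\cA,\cF)$ is the partition corresponding to the single forbidden pair $(F,Y)$, a vertex subset $U$ of $G$ gives a copy of $(F,Y)$ in $(G,X)$ exactly when $(G|_U,f(U))\equiv(F,Y)$. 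So I need many subsets $U$ with this property. I would obtain these from the $\delta' n^{|V(F')|}$ copies of $F'$: each copy of $F'$ as a subgraph sits inside some vertex set, and because $G$ has a $T(m,k)$-rich structure forced by $h(G)$ being large, one can embed $F$ (with its required extra structure $Y$) — this is where the supersat conclusion for the pair itself, rather than for an arbitrary $F'$, must be extracted.

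The honest reduction is the following two-line argument, which I would write out: the partition corresponding to $\{(F,Y)\}$ has abstract chromatic number $k$, and $T(m,k)\in\cF$ for some $m$; since $h(G)>(1+\varepsilon)g(n,(\cA,\cF))\ge (1+\varepsilon)g(n,T(m,k))$ and $g$ is $k$-ESS-supersat, $G$ contains $\delta n^{m}$ copies of $T(m,k)$. Now $T(m,k)$, for $m$ large, contains $F$ as a subgraph (since $\chi(F)=k$ and $T(m,k)$ is complete $k$-partite with huge parts), and moreover — this is the crucial ordinary-ness input — within $(G,X)$ restricted to the vertex set of a copy of $T(m,k)$, the induced extra structure together with the abundance of vertices in each part lets us find a sub-copy realizing the exact pattern $(F,Y)$: formally, $G|_{V(T(m,k))}$ contains $T(m,k)$, which is \emph{not} in $\cA$, so $(G|_{V(T(m,k))}, f(\cdot))$ is not $\{(F,Y)\}$-free, hence contains $(F,Y)$. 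A standard double-counting (each copy of $(F,Y)$ on $|V(F)|$ vertices extends to at most $\binom{n}{m-|V(F)|}$ copies of $T(m,k)$, and each copy of $T(m,k)$ yields at least one copy of $(F,Y)$) converts $\delta n^m$ copies of $T(m,k)$ into $\Omega(n^{|V(F)|})$ copies of $(F,Y)$, after shrinking $\delta$.

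The main obstacle is the step asserting that a copy of $T(m,k)$ inside $G$ forces a copy of the \emph{specific} pair $(F,Y)$ in $(G,X)$: one must check that $T(m,k)\notin\cA$ genuinely forbids every extra structure on $T(m,k)$, and that the induced structure $f(U)$ on $U=V(T(m,k))$ is an admissible element of $\cQ$ (guaranteed by ordinariness), so that $\{(F,Y)\}$-freeness fails \emph{for this induced structure}, producing an honest sub-pair $\equiv(F,Y)$. One also needs $m$ large enough that $T(m,k)\notin\cA$ and simultaneously large enough to apply the supersaturation conclusion; both are available since the abstract chromatic number is exactly $k$. The double-counting and the choice of $\delta$ as a function of $\varepsilon$, $m$, $|V(F)|$ are then routine.
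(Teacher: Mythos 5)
Your final ``honest reduction'' is precisely the paper's argument: supersaturation applied to $T(m,k)$ gives $\delta n^{m}$ copies of $T(m,k)$, ordinariness endows each copy's vertex set $U$ with the induced structure $f(U)$, the fact that $G|_U$ contains $T(m,k)$ forces $G|_U\notin\cA$ and hence a copy of $(F,Y)$ inside $(G|_U,f(U))$, and the double count over the at most $\binom{n-|V(F)|}{m-|V(F)|}$ extensions of a copy of $(F,Y)$ to a copy of $T(m,k)$ finishes. The exploratory first two paragraphs (going via an arbitrary $F'\in\cF$) are superfluous, but the core proof is essentially identical to the paper's, including the reduction $h(G)>(1+\varepsilon)g(n,T(m,k))$ used to invoke the $k$-ESS-supersat hypothesis.
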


\begin{proof}
    Similar to the previous proof, we have that $G$ contains at least $\delta n^{m}$ copies of $T(m,k)$. By the ordinary property, each copy $T$ of $T(m,k)$ has an extra structure $X'=f(V(T))$, and $(T,X')$ contains a copy of $(F,Y)$. Clearly less than $\delta n^{|V(F)|}$ copies of $F$ are contained in less than $\delta n^{|V(F)|}\binom{n-|V(F)|}{m-|V(F)|}<\delta n^{m}$ copies of $T(m,k)$, a contradiction completing the proof.
\end{proof}

\section{Tur\'an-type functions and suitable partitions}\label{examp}

Let us list some Tur\'an-type functions that satisfy the requirements of some of our theorems. We start with $k$-ESS functions.

\textbf{Counting edges, cliques.} The examples in \cite{Coreg} and \cite{CoregRazb}. The Erd\H os-Stone-Simonovits theorem \cite{ES1966,ES1946} itself shows that counting edges is strongly $k$-ESS, and a theorem of Alon and Shikhelman \cite{alon} shows that counting $K_t$ is strongly $k$-ESS if $k>t$.

\textbf{Counting asymptotically (weakly) Tur\'an-good graphs.} Considering that the Tur\'an graph is the extremal graph when we forbid cliques and count edges, it is natural to ask: What graphs $H$ have the property that $\ex(n,H,K_{k})=\cN(H,T(n,k-1))$, at least for $n$ large enough? This property was named as \textit{$k$-Tur\'an-good} in \cite{gerpal}. The graph $H$ is \textit{weakly $k$-Tur\'an-good} if $\ex(n,H,K_{k})=\cN(H,T)$ for some complete $(k-1)$-partite graph. Recall that $\cN(H,T)$ denotes the number of copies of $H$ in $G$. We only need an asymptotic version, but we need it to hold for every $k$-chromatic graph $F$ in place of $K_k$. However, this requirement follows from the $k$-Tur\'an-goodness using the removal lemma \cite{efr}: if we have an $n$-vertex $F$-free graph, we can remove each copy of $K_k$ by deleting $o(n^2)$ edges. This way we removed $o(n^{|V(H)|})=o(\ex(n,H,F))$ copies of $H$. 

Asymptotic (and usually exact) $k$-Tur\'an-goodness has been proved for several graphs. Most usually it is accompanied with a stability result that we will return to shortly. Highlights include the following results: complete $t$-partite graphs with $t<k$ are weakly $k$-Tur\'an-good \cite{gyps}, paths are $k$-Tur\'an-good \cite{gerpal}, and each graph is $k$-Tur\'an-good if $k$ is large enough \cite{mnnrw}.

\textbf{Functions of degree sequences.}  Let $f$ be a non-decreasing log-continuous function and $h(G):=\sum_{v\in V}f(d(v))$. Here log-continuous means that for every $\varepsilon>0$ there exists $\delta>0$ such that for any $m,n$ with $m\le n\le (1+\delta)m$ we have $f(m)\le (1+\varepsilon)f(n)$.
Pikhurko and Taraz \cite{pita} showed that $g_h$ is weakly $k$-ESS. The study of the special case $f(n)=n^r$, $r$ is an integer was initiated by Caro and Yuster \cite{cy}, who conjectured that this function is weakly $k$-ESS. It was proved for any real $r\ge1$ by Bollob\'as and Nikiforov \cite{bolnik}. They showed in \cite{bolnik2} that if $r\le k$, then this function is strongly $k$-ESS.

\textbf{Some topological indices.} 
There are several topological indices of the form $h(G)=\sum_{uv\in E(G)}f(d(u),d(v))$. They are used in chemical graph theory. Gerbner \cite{ger1} showed that is $f$ is a monotone increasing polynomial, then $g_h$ is weakly 3-ESS, moreover, weakly 3-ESS-stable.

\textbf{Spectral radius.} Let $h(G)$ denote the spectral radius of the adjacency matrix of $G$. Nikiforov \cite{niki} showed that $g_h$ is strongly $k$-ESS.

\textbf{$p$-spectral radius.} Kang and Nikiforov \cite{kani} initiated the study of Tur\'an-type problems for the $p$-spectral radius. This is defined as $h(G)=\max\{2\sum_{uv\in E(G)} x_ux_v: \, x_1,\dots,x_n\in\mathbb{R},\, |x_1|^p+\dots+|x_n|^p=1\}$. Li and Peng \cite{lipe} showed that $g_h$ is strongly $k$-ESS.

\textbf{Higher order spectral radius.} The $t$-clique tensor of a graph $G$ is an order $t$ dimension $n$ tensor, with entries $a_{i_1i_2\dotsi_i}=1/(t-1)!$ if $v_{i_1},\dots,v_{i_t}$ form a clique in $G$, and 0 otherwise. Let $h(G)$ denote the spectral radius of this tensor. Lu, Zhou and Bu \cite{lzb} showed that $g_h$ is strongly $(t+1)$-ESS.

\textbf{Local density.} Let $h(G)=h_\alpha(G)$ denote the smallest number of edges spanned by $\alpha n$ vertices of $G$. Keevash and Sudakov \cite{kesu} showed that if $1-1/2(k-1)^2\le \alpha\le 1$, then $g_h$ is strongly $k$-ESS.

\textbf{Small perturbations and combinations of $k$-ESS functions.} Clearly, if we add or multiply strongly $k$-ESS functions, we obtain new strongly $k$-ESS functions. An example where such functions have been studied is counting multiple graphs at the same time. Also, adding to a weakly or strongly $k$-ESS function $h(G)$, a function that is $o(h(G))$ results in another weakly or strongly $k$-ESS function. For example, counting stars $S_r$ and $\sum_{v\in V(G)}d(v)^r$ differs by a constant factor and a negligible additive term, thus one being weakly $k$-ESS implies the same for another.

\smallskip

Let us continue with some $k$-ESS-stable functions.

\textbf{Counting edges.} The well-known Erd\H os-Simonovits stability \cite{erd1,erd2,simi} means that $\ex(n,F)$ is $k$-ESS-stable.

\textbf{Counting (weakly) $F$-Tur\'an-stable graphs.}
The first stability result concerning $\ex(n,H,F)$ is due to Ma and Qiu \cite{mq}, who showed that counting cliques $K_t$ is $k$-ESS-stable if $k>t$. Several other results followed, and in fact by now in most cases when we know that counting $H$ is $k$-ESS, we also know that it is $k$-ESS-stable. Highlights include paths \cite{hhl}, complete $t$-partite graphs with $t<k$, and every graph if $k$ is large enough \cite{gerham}. Several other results can be found in \cite{gerb2}.

\textbf{Spectral radius.} Nikiforov \cite{niki2} showed that the spectral radius is strongly $k$-ESS-stable.

\textbf{$p$-spectral radius.} Li and Peng \cite{lipe} showed that $g_h$ is strongly $k$-ESS-stable.

\smallskip

Now let us look at an example of a strongly $k$-ESS-sigma function.

\textbf{Counting large complete balanced $(k-1)$-partite graphs.} Gerbner \cite{gerb3} showed that if $H$ is the complete $(k-1)$-partite graph $K_{a,\dots,a}$ and $a$ is large enough, then counting $H$ is strongly $k$-ESS-sigma.

Let us list some $k$-ESS-supersat functions.

\textbf{Counting subgraphs.} It was shown in \cite{ES83} that $\ex(n,F)$ is $k$-ESS-supersat. It was extended to every subgraph of chromatic number at most $k$ by Halfpap and Palmer \cite{halpal}.

\smallskip

Let us continue with listing some suitable partitions. It is clear that we obtain results for several partitions, in particular for several forbidden graphs with extra structure. Let us list some examples where $\cF$ is defined by some objects that have been studied before.

\textbf{Edge-ordered, vertex-ordered, cyclically ordered graphs.} These were the main examples of graphs with extra structure in \cite{Coreg,CoregRazb}. An interested reader may find more details \cite{bkv,GMNPTV,pachtar,tard}.

\textbf{Forbidden induced family of graphs.} This is another important example from \cite{CoregRazb}. This corresponds to hereditary partitions.

\textbf{Rainbow Tur\'an.} Keevash, Mubayi, Sudakov and Verstra{\"e}te \cite{kmsv} introduced the following problem. What is the maximum number of edges in an $n$-vertex graph that has a proper edge-coloring without a rainbow copy of $F$? Here rainbow copy of $F$ means that each edge gets a distinct color. Counting other subgraphs in this setting was initiated in \cite{gmmp}.

Keevash, Mubayi, Sudakov and Verstra{\"e}te \cite{kmsv} showed that the abstract chromatic number of $F$ is $\chi(F)$ by showing that any proper edge-coloring of the complete $k$-partite graph $K_{k^3t^3,\dots,k^3t^3}$ contains a rainbow copy of the complete $k$-partite graph $K_{t,\dots,t}$. Here we extend this by showing that $\sigma$ does not increase either.

\begin{lemma} 
    Let $\chi(F)=k$, $\sigma(F)=t$ and $n$ be sufficiently large. Then any proper edge-coloring of the complete $k$-partite graph $K_{t,n,\dots,n}$ contains a rainbow copy of $F$.
\end{lemma}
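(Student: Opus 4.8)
The plan is to find, inside any proper edge-coloring of $K_{t,n,\dots,n}$, a rainbow copy of the complete $k$-partite graph $T$ with parts of sizes $t,t,\dots,t$ and one part reduced to size... wait, actually we need a rainbow copy of $F$ itself. Since $\chi(F)=k$ and $\sigma(F)=t$, fix a proper $k$-coloring of $F$ with color classes $A_1,\dots,A_k$ where $|A_1|=t$ and $|A_i|\le |V(F)|=:r$ for all $i$. Then $F$ is a subgraph of the complete $k$-partite graph $H:=K_{t,r,\dots,r}$ (with the first part of size $t$ aligned to $A_1$), so it suffices to find a rainbow copy of $H$ inside the given coloring of $K_{t,n,\dots,n}$; the desired rainbow $F$ is then obtained by restricting to the edges of $F$ inside that copy. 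So the whole task reduces to: a proper edge-coloring of $K_{t,n,\dots,n}$ contains a rainbow $K_{t,r,\dots,r}$ for $n$ large.

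The key step is to handle the small part of size exactly $t$ without blowing it up. I would first apply the Keevash--Mubayi--Sudakov--Verstra\"ete result (quoted in the excerpt) to the last $k-1$ parts: inside $K_{n,\dots,n}$ (on parts $2,\dots,k$) there is already, for $n$ large, a rainbow copy of $K_{N,\dots,N}$ for any fixed $N$ of our choosing; here we choose $N$ enormous compared to $t$, $r$, and the number of colors that will matter. This gives us a rainbow complete $(k-1)$-partite graph $R$ on parts $W_2,\dots,W_k$ of size $N$ each. Now we must attach $t$ vertices from the first part $U$ (with $|U|=t$, so we must use all of $U$) so that all the new edges between $U$ and $W_2\cup\dots\cup W_k$ are distinct colors and distinct from the colors used in $R$. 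The coloring restricted to $U\times(W_2\cup\dots\cup W_k)$ is a proper edge-coloring of $K_{t,(k-1)N}$; we need to select sub-parts $W_i'\subseteq W_i$ of size $r$ each such that the edges from $U$ to $\bigcup W_i'$ together with $R|_{\bigcup W_i'}$ form a rainbow graph.

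To do this I would argue greedily/probabilistically on the vertices of $U$ one at a time. Since $R$ uses at most $\binom{k-1}{2}N^2$ colors but we only care, in the end, about the $\binom{k-1}{2}r^2$ colors surviving in $R|_{\bigcup W_i'}$ — here is where a cleaner approach helps: process the $t$ vertices $u_1,\dots,u_t$ of $U$ in turn, maintaining large candidate sets $W_i^{(j)}$. When adding $u_j$, the edges from $u_j$ to $W_i^{(j)}$ receive colors; since the coloring is proper, the $|W_i^{(j)}|$ edges at $u_j$ going into $W_i^{(j)}$ all have distinct colors, but colors may repeat across different $i$, and may coincide with colors already committed. At each stage only a bounded number ($O(r)$ per previously-placed vertex, plus the $O(r^2)$ colors of $R$ restricted to current candidates) of colors are "forbidden"; by a counting/pigeonhole argument, since each candidate set is still of size $\ge N/2^{j}$ which is $\gg$ the number of forbidden colors, we can pass to sub-candidate-sets that avoid all conflicts while shrinking each $W_i$ by only a bounded factor. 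After all $t$ steps the candidate sets still have size $\gg r$, and we pick any $r$ vertices from each; the resulting $K_{t,r,\dots,r}$ is rainbow by construction.

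The main obstacle is the bookkeeping of which colors are "already used" as we greedily attach the $t$ vertices of the small part: we must ensure simultaneously that (i) edges from $u_j$ to the various parts avoid each other, (ii) they avoid edges from earlier $u_{j'}$, and (iii) they avoid the colors that will remain in the rainbow $(k-1)$-partite graph on the final sub-parts. Issue (iii) is the delicate one, because the set of "surviving $R$-colors" is only determined after we fix the final sub-parts; the fix is to reverse the order — first shrink to sub-parts making the $K_{t,*,\dots,*}$-edges consistent, keeping candidate sets polynomially large, and only then invoke the rainbow $(k-1)$-partite result on those already-shrunk-but-still-large parts, or alternatively to carry along the (bounded, since the final picture has only $O(r^2+tr)$ edges) list of committed colors throughout. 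Either way the numbers work because each deletion removes at most a constant (independent of $n$) fraction of each candidate set, while we start with candidate sets of size $N=N(k,r,t)$ that we are free to take as large as we like, and $n$ is taken large enough afterwards.
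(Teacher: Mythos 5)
There is a genuine gap, and it sits exactly where you flagged it: the interaction between the colors committed on the edges leaving the small part $U$ and the colors of the rainbow $(k-1)$-partite graph produced by the Keevash--Mubayi--Sudakov--Verstra\"ete black box. Your ``reverse the order'' fix does not resolve it as stated: after you shrink the large parts so that the $U$-edges are mutually rainbow and then invoke KMSV on the shrunk parts, the rainbow $K_{N,\dots,N}$ you obtain may still reuse colors already appearing on $U$-edges to the very vertices it occupies, and nothing in the KMSV statement lets you forbid a vertex-dependent list of colors. One can patch this (each committed $U$-edge color appears on at most one edge of the rainbow subgraph, giving a bounded-degree conflict hypergraph from which one extracts a large independent set), but that argument is not in your write-up, and your proposal as it stands ends by naming the obstacle rather than overcoming it. A secondary issue is that reducing to a rainbow $K_{t,r,\dots,r}$ makes the problem strictly harder than necessary: $F$ has far fewer edges than that complete multipartite graph, and it is the extra edges that create most of the bookkeeping you are fighting.

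Your second suggested fix --- ``carry along the bounded list of committed colors throughout'' --- is the right idea, but taken seriously it eliminates the KMSV invocation entirely and becomes the paper's proof: embed $F$ directly into $K_{t,n,\dots,n}$ color class by color class, putting the class of size $t$ into the part of size $t$ and proceeding greedily. At any point fewer than $\binom{|V(F)|}{2}$ colors have been used on embedded edges, and since the coloring is proper each already-embedded vertex has at most one neighbor of each used color in the current part, so fewer than $|V(F)|\binom{|V(F)|}{2}$ candidates are excluded; with $n$ large there is always room to continue. No preliminary rainbow multipartite structure is needed, and the small part causes no difficulty because it is embedded bijectively and its incident edges are checked only when the later parts are placed. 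I would recommend discarding the two-stage architecture and writing out this direct greedy embedding.
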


\begin{proof}
    Consider a properly edge-colored $K_{t,n,\dots,n}$ with parts $|X_1|=t$ and $X_2,\dots,X_k$. We let the color classes of $F$ be $|Y_1|=t$ and $Y_2,\dots,Y_k$. We will embed the sets $Y_i$ into $X_i$ greedily to obtain a rainbow copy of $F$. First we embed $Y_1$ into $X_1$ arbitrarily. After embedding $Y_1,\dots,Y_i$, we will embed $Y_{i+1}$. At this point we have embedded less than $|V(F)|$ vertices, thus there are less than $\binom{|V(F)|}{2}$ colors used on the already embedded edges. For each of the already embedded vertices $u$, we remove each vertex $v$ from $X_{i+1}$ if $uv$ is of a color already used. As there is at most one neighbor of $u$ in each color, less than $|V(F)|\binom{|V(F)|}{2}$ vertices were deleted from $X_{i+1}$, thus we can pick $|Y_{i+1}|$ other vertices if $n$ is sufficiently large. We can complete the embedding, thus the proof is complete.
\end{proof}

\section{An exact result for balanced graph parameters}\label{harom}

So far, we have not made any assumption on the graph parameter $h$ itself. We can prove another exact result if $h$ is balanced in the following sense. We think of the increase of $h$ when adding an edge to a graph as the contribution of that edge. We are interested in $h$ where we are given an upper bound on the order of magnitude of the contribution of every edge. Furthermore, when the graph is closer to a complete multipartite graph, then we also have the same lower bound on the order of magnitude of the contribution of every edge, i.e., the contributions have the same order of magnitude.

\begin{defn}
    We say that a graph parameter $h$ is \textit{balanced} for a positive integer $k$ if the following properties hold for some constant $a>0$:
    \begin{enumerate}[label=(\alph*)]
        \item For any graph $G$ and any non-edge $e$ of $G$, if $G'$ is obtained from $G$ by adding $e$, then $h(G')=h(G)+O(n^{a})$.
        \item If $G''$ is obtained from $G$ by adding a new vertex $u$ and joining $u$ to all the neighbors of an arbitrary vertex $v$, then $h(G'')=h(G)+O(n^{a+1})$.
        \item For any $c>0$ there is $n_0$ such that if $n\ge n_0$ and $G$ is a complete $(k-1)$-partite $n$-vertex graph with each part of order at least $cn$, then $h(G')=h(G)+\Theta(n^{a})$ and $h(G'')=h(G)+\Theta(n^{a+1})$.
        \item If $G$ is a complete $(k-1)$-partite $n$-vertex graph with each part of order at least $cn$ and $G'''$ is obtained from $G$ by deleting $x$ edges, then $h(G''')=h(G)-\Theta(xn^a)$.
    \end{enumerate}
\end{defn}

Recall that if the abstract chromatic number of $(\cA,\cF)$ is $k$, then each complete $(k-1)$-partite graph with each part of order at least $n$ is in $\cA$, for every sufficiently large $n$.
\begin{defn}
    A partition $(\cA,\cF)$ is \textit{edge-critical} if it is suitable with abstract chromatic number $k$ but for large enough $n$, no $G\in \cA$ contains $T^+(n,k-1)$ as a subgraph, where $T^+(n,k-1)$ is obtained from $T(n,k-1)$ by adding an edge into one of the smallest parts.  
\end{defn}

One particular example is when $\cF=\cF(F)$ for a $k$-chromatic graph $F$ with a color-critical edge, i.e., an edge whose deletion decreases the chromatic number. We will show in Proposition \ref{prop4.5} that the rainbow Tur\'an problem for a $k$-chromatic graph $F$ with a color-critical edge gives an edge-critical partition. However, in general, if we take some extra structure on $F$, it is unclear whether the corresponding partition is edge-critical. In fact, we are not aware of any such example.

Simonovits \cite{sim2} proved that if $F$ has a color-critical edge and $n$ is sufficiently large, then $\ex(n,F)=|E(T(n,\chi(F)-1))|$. We can extend this result to our setting in the balanced case if we also have stability.

\begin{thm}
    Let $g=g_h$ be a Tur\'an-type function that is weakly $k$-ESS-stable such that $h$ is balanced for $k$. Let $(\cA,\cF)$ be an edge-critical partition with abstract chromatic number $k$. Then $g(n,(\cA,\cF))=h(T)$ for some complete $(k-1)$-partite graph $T$ on $n$ vertices where $n$ is sufficiently large. 
\end{thm}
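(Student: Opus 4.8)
The plan is to follow Simonovits' proof of his color-critical edge theorem~\cite{sim2}, with the edge-criticality of $(\cA,\cF)$ playing the role of the color-critical edge of $F$: where Simonovits deduces that $F$ appears, we deduce that $T^+(m,k-1)$ appears for arbitrarily large $m$, contradicting edge-criticality; and throughout we use the balanced conditions (a)--(d) to translate statements about $h$ into statements about edge counts of graphs close to complete $(k-1)$-partite graphs with parts of linear size.

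Fix $n$ large and let $G\in\cA$ be an $n$-vertex graph with $h(G)=g(n,(\cA,\cF))$ (the maximum is attained since $T(n,k-1)\in\cA$ and there are finitely many $n$-vertex graphs). Three consequences of the hypotheses are recorded. First, by the definition of the abstract chromatic number there is a constant $c>0$ such that every complete $(k-1)$-partite $n$-vertex graph with all parts of size at least $cn$ lies in $\cA$; in particular $T(n,k-1)\in\cA$, so $h(G)\ge h(T(n,k-1))$. Second, by edge-criticality there is $m_0$ such that no graph in $\cA$ contains $T^+(m,k-1)$ for any $m\ge m_0$. Third, by Theorem~\ref{main} and the transfer of $k$-ESS-stability to suitable partitions established in Section~\ref{sec2}, $G$ can be made complete $(k-1)$-partite by editing $o(n^2)$ edges; fix a partition $V_1,\dots,V_{k-1}$ of $V(G)$ realising this and chosen so as to minimise the number $b$ of \emph{bad pairs} (edges inside a part plus non-edges between parts), so $b=o(n^2)$. (We also use that $G-v\in\cA$ for every $v$, which holds for monotone or hereditary partitions, hence in all of our examples.)

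The preparatory steps are bookkeeping with (a)--(d). Moving a vertex between two parts of a complete $(k-1)$-partite graph with parts of linear size, or deleting edges from such a graph, changes $h$ by $\mp\Theta(\,\cdot\,n^{a})$ in the ``edge-count direction'', so the $h$-maximum over $n$-vertex complete $(k-1)$-partite graphs in $\cA$ is attained near the balanced one; with $h(G)\ge h(T(n,k-1))$ and $b=o(n^2)$ this forces $|V_i|=\tfrac n{k-1}+o(n)$ for all $i$, in particular $|V_i|\ge cn$, so that $T_0:=K_{|V_1|,\dots,|V_{k-1}|}$ (on $V(G)$) lies in $\cA$ and $h(T_0)\le h(G)$. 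Next one establishes the sharp bound $\delta(G)\ge\bigl(1-\tfrac1{k-1}\bigr)n-o(n)$: since $G-v\in\cA$ we have $h(G)-h(G-v)\ge g(n,(\cA,\cF))-g(n-1,(\cA,\cF))$, and by Theorem~\ref{main} and (c) the right-hand side is at least the increase obtained by attaching a $\bigl(1-\tfrac1{k-1}\bigr)n$-valent clone to a near-balanced complete $(k-1)$-partite graph, whereas by (a) the contribution of $v$ is at most $O(d(v)n^{a})+o(n^{a+1})$; comparing yields the bound (as in~\cite{sim2} one may run this as an induction on $n$). Now call $v\in V_i$ \emph{good} if it has $o(n)$ bad incidences; the minimality of the partition gives $2|N(v)\cap V_i|-|V_i|\le 2|N(v)\cap V_j|-|V_j|$ for all $j$, so $v$ has fewest neighbours in its own part, and combined with the minimum-degree bound this yields the key identity that the number of non-neighbours of $v$ outside $V_i$ equals $|N(v)\cap V_i|+o(n)$. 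Hence a vertex with $o(n)$ intra-edges is good, and a vertex with $\Omega(n)$ intra-edges has $\Omega(n)$ neighbours in every part; from this one deduces that the endpoints $u,v$ of any intra-edge, say in $V_1$, have $\Omega(n)$ common neighbours in each $V_j$ with $j\ge2$ (the few remaining configurations being excluded by a short extremality argument). Since $|V_1|\ge cn$, we may then build greedily a copy of $T^+(m,k-1)$ with $m=m(n)\to\infty$: place $u,v$ in the first part and fill the other parts from the common neighbourhoods, at each step discarding the $O(m^2)$ clashing vertices -- possible since $b=o(n^2)$ lets the goodness threshold tend to $0$. For $n$ large $m\ge m_0$, contradicting edge-criticality, so $G$ has no intra-edges; then $G\subseteq T_0$ is obtained from $T_0$ by deleting $b$ cross-edges, so by (d) $h(G)=h(T_0)-\Theta(bn^{a})$, and $h(T_0)\le h(G)$ forces $b=0$. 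Thus $G=T_0$, and the theorem holds with $T=G$.

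The main obstacle is the last part: converting ``$G$ has an intra-edge'' into ``$G$ contains arbitrarily large $T^+(m,k-1)$'' genuinely requires the sharp minimum-degree bound (exactly as Simonovits needs the color-critical edge together with $\delta(G)\ge(1-\tfrac1{k-1})n-o(n)$), and obtaining that bound is where conditions (a) and (c) do the real work, since for an arbitrary parameter $h$ the effect of deleting a vertex need not track the edge count as cleanly as in the classical case. Locating and balancing the partition and the final comparison via (d) are comparatively mechanical.
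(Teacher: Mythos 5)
Your skeleton (stability, locate a nearly complete $(k-1)$-partite structure, embed $T^+(m,k-1)$ from any intra-part edge to contradict edge-criticality, then finish by an edge-count comparison via (d)) is the same as the paper's, but two of your intermediate steps have genuine gaps. First, you derive the minimum-degree bound from $G-v\in\cA$, and you concede this needs the partition to be monotone or hereditary. The theorem assumes only that $(\cA,\cF)$ is edge-critical, and an edge-critical partition need not be closed under vertex deletion, so this is an added hypothesis rather than a proof of the stated result. The paper never deletes vertices from $G$; it instead exploits the edit-minimality of the chosen partition $V_1,\dots,V_{k-1}$: a vertex with $d$ neighbours inside its own part has at least $d$ neighbours in every other part (otherwise moving it to another part would decrease the number of edits). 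That local exchange property, combined with the sets $A_i$ (vertices missing only $o(n)$ cross-edges) and $B_i$ (vertices with $\Omega(n)$ intra-part neighbours), is what replaces $\delta(G)\ge(1-\frac{1}{k-1})n-o(n)$ when greedily embedding $T^+(m,k-1)$ from an intra-part edge.

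Second, your claim that $|V_i|=\frac{n}{k-1}+o(n)$, and hence the sharp degree bound itself, is false at this level of generality. The hypothesis is only \emph{weak} $k$-ESS-stability, so the extremal complete $(k-1)$-partite graph may be genuinely unbalanced (e.g.\ for $h(G)=\sum_v d(v)^r$ with $r$ large, which is weakly but not strongly $k$-ESS); this is precisely why the conclusion reads ``some complete $(k-1)$-partite graph $T$'' rather than $T(n,k-1)$. In an unbalanced optimum, a vertex of the largest part has degree strictly below $(1-\frac{1}{k-1})n$, so the inequality you aim for cannot hold. All one can, and need, establish is $|V_i|\ge\varepsilon n$ for every $i$ (Claim \ref{ght} in the paper, proved by moving linear-sized chunks of the largest part into any sublinear part and using (b)--(d) to show $h$ would strictly increase, contradicting optimality of $G$). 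With only $|V_i|=\Omega(n)$ available, your route to the common-neighbourhood statement for the endpoints of an intra-part edge collapses and must be replaced by the exchange-property argument above. The remaining parts of your proposal (the greedy embedding of $T^+(m,k-1)$ and the final comparison via (d) showing $G$ coincides with the complete multipartite graph) do align with the paper.
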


We follow the proof of Theorem 1.5 in \cite{gerb2}, which deals with counting copies of some graphs. We remark that in that theorem the assumption is slightly more general than being edge-critical. It is likely that a similar generalization would also hold in our setting.

\begin{proof} We pick $\varepsilon>0$ sufficiently small depending on $h$ and $\cA$. We also pick a sufficiently large $m$ depending on $\cA$ such that no graph in $\cA$ contains $T^+(m,k-1)$ as a subgraph. We pick $n$ to be sufficiently large with respect to $h$, $\cA$, $\varepsilon$ and $m$.

Let $G$ be an $n$-vertex graph in $\cA$ with $h(G)=g(n,(\cA,\cF))$. By the weakly $k$-ESS-stable property, $G$ can be turned into a complete $(k-1)$-partite graph $T$ by adding and/or deleting $o(n^2)$ edges. We pick $T$ so that we need to add and/or delete the smallest number of edges this way. Let $V_i$ be the $i$-th part of $T$ with $|V_1|\le |V_2|\le\dots\le |V_{k-1}|$. Observe that if $v\in V_i$ has $d$ neighbors in $V_i$ in $G$, then $v$ has at least $d$ neighbors in every $V_j$ in $G$, otherwise we could move it to $V_j$ and obtain another complete $(k-1)$-partite graph instead of $T$ that can be obtained from $G$ by adding and/or deleting a smaller number of edges.

\begin{clm}\label{ght}
    $|V_1|\ge \varepsilon n$ if $n$ is sufficiently large.
\end{clm}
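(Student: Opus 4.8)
The plan is to argue by contradiction: assume $|V_1|<\varepsilon n$ and produce a graph in $\cA$ whose $h$-value exceeds $h(G)$, contradicting $h(G)=g(n,(\cA,\cF))$. We may assume $k\ge 3$, since for $k=2$ the graph $T$ is edgeless and $|V_1|=n$. The first step is to convert the stability approximation into an estimate on $h$. By property (a) in the definition of balanced, adding or deleting a single edge changes $h$ by $O(n^a)$ (for deletion, undo it and reuse (a)); since $G$ is obtained from $T$ by adding and/or deleting $o(n^2)$ edges, this gives $h(G)=h(T)+o(n^{a+2})$. Let $T^-$ be obtained from $T$ by merging every part of order less than $\varepsilon n$ into the largest part $V_{k-1}$ (note $|V_{k-1}|\ge n/(k-1)>\varepsilon n$, so $V_{k-1}$ itself is untouched). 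Recovering $T$ from $T^-$ adds at most $C'\varepsilon n^2$ edges for a constant $C'=C'(k)$, so again by property (a) we have $h(T)\le h(T^-)+C\varepsilon n^{a+2}$ for some $C=C(h,k)$.

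The heart of the argument is to offset this loss by a genuine gain of order $n^{a+2}$. The graph $T^-$ is complete $r$-partite with $r\le k-2$, every part of order at least $\varepsilon n$, and one part $W\supseteq V_{k-1}$ of order at least $n/(k-1)$. Split $W$ as evenly as possible into $k-r\ge 2$ pieces; each piece then has order at least $n/(2(k-1)^2)\ge\varepsilon n$ once $\varepsilon$ is small enough. The resulting graph $\hat T$ is complete $(k-1)$-partite with all parts of order at least $\varepsilon n$, hence $\hat T\in\cA$ for $n$ large, by the definition of the abstract chromatic number. Moreover $\hat T$ is obtained from $T^-$ by adding all edges among the new pieces, and there are $\Theta(n^2)$ of these; equivalently, $T^-$ is obtained from the complete $(k-1)$-partite graph $\hat T$ (all parts of order at least $\varepsilon n$) by deleting $\Theta(n^2)$ edges, so property (d) yields $h(T^-)=h(\hat T)-\Theta(n^{a+2})$, i.e. $h(\hat T)\ge h(T^-)+\gamma n^{a+2}$ for a constant $\gamma=\gamma(\varepsilon,h,k)>0$.

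Putting the pieces together, and having chosen $\varepsilon$ small enough that $C\varepsilon<\gamma/2$, we get $h(G)\ge h(\hat T)\ge h(T^-)+\gamma n^{a+2}\ge h(T)-C\varepsilon n^{a+2}+\gamma n^{a+2}\ge h(T)+\tfrac{\gamma}{2}n^{a+2}$, which together with $h(G)=h(T)+o(n^{a+2})$ is a contradiction for $n$ sufficiently large. I expect the delicate point to be exactly the bookkeeping that keeps the three scales separated: the gain must be a true $\Theta(n^{a+2})$ while both losses (the $o(n^{a+2})$ from stability and the $C\varepsilon n^{a+2}$ from the merge) stay strictly below it. In particular one must apply property (d) with host $\hat T$ whose parts are only guaranteed to have order $\varepsilon n$ rather than $\Omega(n)$ with a large constant; this is consistent with the order of quantifiers already fixed in the proof ($\varepsilon$ chosen first, then $m$, then $n$), and one checks that the constant $\gamma$ furnished by (d), though depending on $\varepsilon$, remains positive, so the inequality $C\varepsilon<\gamma/2$ is achievable by shrinking $\varepsilon$.
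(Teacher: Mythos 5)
Your proof is correct and follows the same overall strategy as the paper's: assume some part of $T$ is small, construct a better complete $(k-1)$-partite $n$-vertex graph lying in $\cA$, and show that the $\Theta(n^{a+2})$ gain beats the combined $O(\varepsilon n^{a+2})+o(n^{a+2})$ loss, contradicting the maximality of $h(G)$. The constructions differ in detail: the paper transplants $\ell$ disjoint chunks of order $\lfloor n/(k-2)^2\rfloor$ from the largest part $V_{k-1}$ into the small parts and quantifies the gain vertex-by-vertex via property (c), whereas you merge the small parts into $V_{k-1}$ and then re-split that part evenly, quantifying the gain in one shot via property (d). Your version is arguably cleaner --- in particular it sidesteps the question of whether $\ell$ sets of order $\lfloor n/(k-2)^2\rfloor$ actually fit inside $V_{k-1}$, which is awkward for small $k$ --- but the two arguments are morally identical. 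The delicate point you flag, namely that the constant $\gamma$ furnished by (d) is obtained with $c=\varepsilon$ and may therefore depend on $\varepsilon$, so that $C\varepsilon<\gamma(\varepsilon)/2$ does not formally follow from the axioms by merely shrinking $\varepsilon$, is real; but it occurs in exactly the same form in the paper's own proof (which applies (c) to a graph $G_0$ whose parts are only guaranteed to have order $\varepsilon n$, and treats the $O(\varepsilon n^2)$ deleted edges as $o(n^2)$). It is thus a feature of the definition of balancedness rather than a gap specific to your argument.
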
  

\begin{proof}[Proof of Claim]
Assume not and let $\ell$ be the largest integer such that $|V_\ell|<\varepsilon n$. Let us pick arbitrary vertex-disjoint subsets $U_i\subset V_{k-1}$ with $|U_i|=\lfloor n/(k-2)^2\rfloor$ for each $i\le \ell$. We move each $U_i$ form $V_{k-1}$ to $V_i$ to obtain another complete $(k-1)$-partite $n$-vertex graph $T'$. In other words, we delete each edge between $U_i$ and $V_i$ for every $i$, and then add each edge $uv$ with $u\in U_i$, $v\in V_{k-1}\setminus U_i$, for every $i$. Let us compare $h(T)$ and $h(T')$. We deleted $o(n^2)$ edges, which decreases $h$ by $o(n^{a+2})$. 

Now we consider the edges added in two steps. Let us pick a set $U_i'\subset U_i$ with $|U_i'|=\varepsilon n$ for each $i$. First, we add the edges connecting vertices in $U_i'$ to vertices in $V_{k-1}\setminus U_i$. Let $G_0$ denote the graph we obtain by deleting the vertices in $U_i\setminus U_i'$ for every $i$. Therefore $G_0$ is a complete $(k-1)$-partite graph on at least $n/2$ vertices with each part of order at least $\varepsilon n$. Then we add linearly many additional vertices, so that each vertex creates a complete $(k-1)$-partite graph, thus we can apply the definition of the balanced graph parameters to adding the next vertex. Therefore, each vertex increases $h$ by $\Theta(n^{a+1})$, which implies that altogether $h$ is increased by $\Theta(n^{a+2})$.

Thus we have obtained that $h(T')=h(T)+\Theta(n^{a+2})$. We also have $h(T)=(1+o(1))h(G)$. Observe that $h(T)=O(n^{a+2})$, since we can get $T$ from the empty graph by adding $O(n^2)$ edges. These imply that $h(T')=h(T)+\Theta(n^{a+2})>h(G)$, a contradiction.
\end{proof}

Let us return to the proof of the theorem and let $E$ denote the set of edges in $G$ that are not in $T$, i.e., the edges inside some $V_i$. Let $r(u)$ denote the number of edges incident to $u$ in $T$ that are not in $G$, i.e., the edges between parts that are missing from $G$. Then by the definition of $T$, we have $|E|=o(n^2)$ and $\sum_{u\in V(G)} r(u)=o(n^2)$. Let $A$ denote the set of vertices with $r(u)=o(n)$, then $|V(G)\setminus A|=o(n)$. Let $A_i=V_i\cap A$, then by Claim \ref{ght}, $|A_i|=\Omega(n)$. 

Let $B_i$ denote the set of vertices in $V_i$ with $\Omega(n)$ neighbors inside $V_i$.
 
\begin{clm}
   Any $u\in A_i$ has no neighbor in $A_i\cup B_i$.
\end{clm}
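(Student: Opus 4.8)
The plan is to argue by contradiction, producing a copy of $T^+(m,k-1)$ inside $G$, which contradicts the choice of $m$ (no graph in $\cA$ contains $T^+(m,k-1)$, and $G\in\cA$). So assume $u\in A_i$ has a neighbour $w\in A_i\cup B_i$. I will produce pairwise disjoint sets $Q_1,\dots,Q_{k-1}\subseteq V(G)$, each of size $t:=\lceil m/(k-1)\rceil$, that are pairwise completely joined in $G$, with $u,w\in Q_1$. Since $uw\in E(G)$, the subgraph of $G$ on $Q_1\cup\dots\cup Q_{k-1}$ consisting of all edges between distinct $Q_\ell$'s together with $uw$ is the complete $(k-1)$-partite graph $K_{t,\dots,t}$ plus an edge inside one part, and this contains $T^+(m,k-1)$: embed $T(m,k-1)$ part-by-part into $Q_1,\dots,Q_{k-1}$ so that a smallest part of $T(m,k-1)$ lands in $Q_1$ with the two ends of the color‑critical edge mapped to $u$ and $w$; the edge $uw$ then realizes the extra edge.

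The key steps, in order: \textbf{(1)} First show that $w$ has $\Omega(n)$ neighbours in every part $V_j$ with $j\neq i$. This is the only place the hypothesis $w\in A_i\cup B_i$ enters and it splits into two cases. If $w\in A_i$ then $|N(w)\cap V_j|\ge |V_j|-r(w)\ge \varepsilon n-o(n)$, using that every $|V_j|\ge|V_1|\ge\varepsilon n$ by Claim~\ref{ght}. If $w\in B_i$ then $w$ has $\Omega(n)$ neighbours inside $V_i$, so by the local optimality in the choice of $T$ (a vertex has at least as many neighbours in each $V_j$ as inside its own part) it has $\Omega(n)$ neighbours in each $V_j$ too. \textbf{(2)} Using $u\in A_i$ and $|V(G)\setminus A|=o(n)$, deduce that $M_j:=N(u)\cap N(w)\cap V_j\cap A$ has size $\Omega(n)$ for every $j\neq i$. \textbf{(3)} Pick $Q_2,\dots,Q_{k-1}$ greedily: let $j_2,\dots,j_{k-1}$ enumerate $\{1,\dots,k-1\}\setminus\{i\}$, and take $Q_\ell$ to be any $t$ vertices of $M_{j_\ell}$ adjacent to everything chosen so far; each previously chosen vertex lies in $A$, hence excludes only $o(n)$ candidates, and only a bounded number of vertices have been chosen, so $\Omega(n)\ge t$ choices remain. \textbf{(4)} Since $Q_2\cup\dots\cup Q_{k-1}$ has bounded size and each of its vertices (being in $A$) misses only $o(n)$ vertices of $V_i$, their common neighbourhood inside $V_i$ has size $\Omega(n)$; pick $t-2$ of its vertices other than $u,w$ and let $Q_1$ be their union with $\{u,w\}$. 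Finally check adjacencies: $Q_\ell$ versus $Q_{\ell'}$ holds by the greedy construction, and $Q_1$ versus $Q_\ell$ holds because every vertex of $Q_\ell$ lies in $N(u)\cap N(w)$ and is adjacent to all of $Q_1\setminus\{u,w\}$ by step (4).

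The step I expect to be the real obstacle is step (1) for $w\in B_i$: a priori a vertex with many neighbours inside $V_i$ could have very few neighbours outside $V_i$, in which case no copy of $T^+(m,k-1)$ using $w$ as a "high‑degree" vertex could exist at all. The point is that this cannot happen for our $T$, precisely by its minimality — such a vertex would otherwise be moved to a part where it has strictly more neighbours, contradicting the choice of $T$. Once this is in hand, everything else is a routine greedy embedding, the only bookkeeping being that $t$ (and hence the total number of vertices ever selected) is a constant while all the candidate sets $M_j$ and the relevant common neighbourhoods inside $V_i$ are linear in $n$.
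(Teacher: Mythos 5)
Your proposal is correct and follows essentially the same route as the paper: assume the bad edge exists, use the minimality of $T$ (for the $B_i$ case) and the $r(\cdot)=o(n)$ property of $A$-vertices (together with Claim \ref{ght}) to get linear-size candidate sets, and greedily embed $T^+(m,k-1)$ with the extra edge placed on $uw$. The only difference is the order of the greedy embedding (you build the other parts first and fill part $i$ last, while the paper fills part $i$ first), which is immaterial.
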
    

\begin{proof}[Proof of Claim]
Assume that $uv\in E(G)$ with $v\in A_i\cup B_i$. We will show that $G$ contains $T^+(m,k-1)$, contradicting the definition of $m$. Let $U_j$ denote the $j$-th part of $T^+(m,k-1)$, and assume without loss of generality that we added the extra edge inside the $i$-th part. Then we embed the extra edge into $uv$ arbitrarily. We embed the other vertices of $U_i$ into $A_i$ arbitrarily. Next we embed some $U_j$ with $j\neq i$. Observe that $v$ has a set $W_j$ of $\Omega(n)$ neighbors in $A_j$, and only $o(n)$ vertices of $W_j$ are not adjacent to any given vertex of $A_i$. Therefore, with the exception of at most $(m-1)o(n)=o(n)$ vertices, the vertices of $W_j$ are in the common neighborhood of the already picked vertices. We pick $m$ of those vertices in $W_j$, and embed $U_j$ into those vertices.

We continue similarly, embedding the parts of $T^+(m,k-1)$ one by one. When we embed a part $U_\ell$, we pick $m$ vertices from the common neighborhood inside $A_\ell$ of the already embedded vertices. We have embedded 1 vertex into $v\in A_i\cup B_i$ and each other vertex (at most $m-1$ vertices) into some $A_j$ with $j\neq k$. Therefore, out of the $\Omega(n)$ neighbors of $v$ in $A_\ell$, only $o(n)$ vertices are not in the common neighborhood of the already picked vertices. This shows that indeed we can complete the embedding and obtain a contradiction.
\end{proof} 

Let us return to the proof of the theorem. 
The above claim implies that $B_i=\emptyset$, since a vertex in $B_i$ has at most $|V_i\setminus A_i|=o(n)$ neighbors inside $V_i$.
Let $X$ denote a smallest set of vertices inside $V(G)\setminus A$ such that each edge of $G$ inside parts is incident to at least one vertex of $X$. Then $\sum_{u\in X}r(u)=\Omega(n|X|)$. On the other hand, there are $o(n|X|)$ edges incident to $X$ inside $V_i$ because $B_i=\emptyset$. Let $G'$ denote the graph we obtain from $T$ by deleting the edges that are in $T$ but not in $G$. Then by the definition of balanced graph parameters, $h(G)=h(T)-\Omega(n^{a+1}|X|)$. We obtain $G$ from $G'$ by adding $o(n|X|)$ edges inside the parts, thus $h(G)=h(G)+o(n^{a+1}|X|)<h(T)$ if $|X|\neq 0$, a contradiction completing the proof.
\end{proof}

It is easy to see that several graph parameters mentioned earlier are balanced. We remark that our definition of balancedness was chosen to satisfy the requirements of each step of the proof. It would be interesting to find a simpler, yet similarly general definition of balancedness such that the above theorem holds.

Let us now show an edge-critical partition. Recall the rainbow Tur\'an problem from the previous section.

\begin{prop}\label{prop4.5}
    Let $F$ be a $k$-chromatic graph with a color-critical edge. Let $\cA$ denote the family of graphs that have a proper coloring without a rainbow copy of $F$, and $\cF$ denote the family of other graphs. Then $(\cA,\cF)$ is an edge-critical partition.
\end{prop}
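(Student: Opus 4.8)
The plan is to verify that the partition $(\cA,\cF)$ satisfies the two requirements of edge-criticality: first that its abstract chromatic number equals $k$, and second that no member of $\cA$ contains $T^+(n,k-1)$ for large $n$. The first point is immediate from earlier results in the paper: the monotone partition coming from the rainbow Tur\'an problem for $F$ has $\cF=\cF(F)$, so $\sigma(\cA,\cF)=\sigma(\cF)$, and the lemma just before Section~\ref{harom} (rainbow embedding in $K_{t,n,\dots,n}$) together with the known fact that $K_{k-1}$-partite complete graphs with large parts admit proper colorings with no rainbow $F$ shows that the abstract chromatic number is exactly $\chi(F)=k$. In particular, for large $n$ every complete $(k-1)$-partite graph with all parts of order at least $n$ lies in $\cA$, and $T(m,k)\in\cF$ for some $m$.

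The heart of the matter is the second requirement. Let $e=xy$ be a color-critical edge of $F$, so $\chi(F-e)=k-1$. Fix a proper $(k-1)$-coloring of $F-e$; in it $x$ and $y$ receive the same color, and all other color classes together with the class of $\{x,y\}$ give the partition. I would show: for $n$ sufficiently large, any properly edge-colored $T^+(n,k-1)$ contains a rainbow copy of $F$. Write $T^+(n,k-1)$ as the complete $(k-1)$-partite graph with parts $V_1,\dots,V_{k-1}$ (all of size $\Theta(n)$), plus one extra edge $e'=uv$ inside the smallest part, say $V_1$. The idea is to embed the color class of $F$ containing $\{x,y\}$ into $V_1$ so that $x\mapsto u$, $y\mapsto v$ (using the extra edge $e'$ for the image of $e$), and the remaining $k-2$ color classes of $F$ into $V_2,\dots,V_{k-1}$. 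Then every edge of $F$ other than $e$ goes between two distinct parts, hence is a genuine edge of $T(n,k-1)\subseteq T^+(n,k-1)$, and $e$ maps to $e'$; so the image is indeed a copy of $F$. It remains to make this embedding rainbow, which is done greedily exactly as in the earlier rainbow-embedding lemma: embed $\{x,y\}\mapsto\{u,v\}$ first, then process the remaining vertices one at a time; at each step only $O(1)$ colors have been used so far, each vertex already embedded kills at most one candidate per used color (properness), so $O(1)$ candidates are forbidden in the relevant part, and since that part has size $\Theta(n)$ we can always continue once $n$ is large.

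The main obstacle I anticipate is a subtle but real one: in $T^+(n,k-1)$ the extra edge $e'=uv$ lies inside $V_1$, and the rest of the color class of $\{x,y\}$ in $F$ must also land in $V_1$, but those other vertices are \emph{nonadjacent} to $x$ and $y$ in $F$ — which is automatically consistent, since $V_1$ is an independent set apart from $e'$. So one must check that $e$ is the \emph{only} edge of $F$ inside its color class; this is exactly the content of ``$e$ is color-critical with the chosen $(k-1)$-coloring of $F-e$'', and one should be slightly careful to choose the coloring so that $x,y$ are the only pair in their class joined by an edge of $F$ — equivalently, that $F-e$ is $(k-1)$-colored with $x,y$ in a common class. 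This is possible precisely because $e$ is color-critical. Once this bookkeeping is set up, the greedy argument goes through verbatim, the only change from the earlier lemma being that the first two vertices are forced to be the endpoints of $e'$ rather than chosen freely.

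To finish, I would note that both halves combine immediately: the partition is suitable with abstract chromatic number $k$ (first paragraph), and for large $n$ no $G\in\cA$ contains $T^+(n,k-1)$, because such a $G$ has a proper edge-coloring with no rainbow $F$, whereas the second paragraph shows every properly edge-colored $T^+(n,k-1)$ — and hence every properly edge-colored supergraph of it — contains a rainbow $F$. Therefore $(\cA,\cF)$ is edge-critical, as claimed.
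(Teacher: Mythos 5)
Your proposal is correct and follows essentially the same route as the paper: take the $(k-1)$-coloring of $F-e$ in which the color-critical edge $e$ is the unique edge inside a class, map its endpoints to the extra edge of $T^+(n,k-1)$, and complete the embedding greedily using properness of the edge-coloring to exclude only $O(1)$ candidates per step. The only (harmless) difference is that you also spell out why the abstract chromatic number is $k$, which the paper leaves to the earlier discussion of the rainbow Tur\'an problem.
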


\begin{proof}
Let us assume that $G$ contains $T^+(n,k-1)$ for $n$ sufficiently large and let $A_i$ be the $i$-th part of $T^+(n,k-1)$, with the extra edge $uv$ in $A_1$. Since $F$ has a color-critical edge, there is a $(k-1)$-partition of $F$ into $B_1,\dots, B_{k-1}$ such that the only edge inside parts is the edge $u'v'$ inside $B_1$.

We will embed $F$ into a properly colored $T^+(n,k-1)$ in a rainbow way, obtaining a contradiction.
First we map $u'$ into $u$ and $v'$ into $v$. Then we embed the rest of $B_1$ to the rest of $A_1$ arbitrarily. Afterward, we will embed the rest of the vertices of $B_2$ into $A_2$, then $B_3$ into $A_3$, and so on. We pick the order of the vertices inside the parts arbitrarily.

When we embed a vertex $w'\in B_i$, we have to pick a vertex $w$ of $A_i$ such that the edges connecting $w$ to the already embedded less than $|V(F)|$ vertices have color distinct from each of the at most $|E(F)|$ colors used earlier in the embedding. For each already embedded vertex $z$ and each already used color $c$, there is at most one vertex of $A_i$ that is joined by an edge of color $c$ to $z$. This shows that there are at most $|V(F)||E(F)|$ forbidden vertices in $A_i$, thus we can pick one where we embed $w$. We continue this way till we embed $F$, a contradiction completing the proof.
\end{proof}

\section{Concluding remarks}\label{negy}

There are several other Erdős-Stone-Simonovits-type theorems we can obtain by modifying our definitions a little bit. We say that $g_h$ is \textit{robust} if the following holds. If $G$ contains $T(n,k-1)$ and $G'$ is obtained from $G$ by adding and/or deleting $o(n^2)$ edges, then $h(G')=(1+o(1))h(G)$. Counting subgraphs of chromatic number at most $k-1$ clearly has this property. 

Let us say that a partition is \textit{suitable for robust Tur\'an-type functions} if there is a $k$ such that the following two properties hold. For sufficiently large $n$, for each complete $(k-1)$-partite graph $T$ with each part of order at least $n$, there is a graph in $\cA$ that can be obtained from $T$ by adding and/or deleting $o(n^2)$ edges. For every $G\in\cA$, we can delete $o(n^2)$ edges from $G$ to obtain a graph $G'$ that does not contain $T(n,k)$ as a subgraph, or if $K_n\in\cA$ for sufficiently large $n$. 

It is easy to see that some of the arguments in this paper extend to this situation. For example, if the the lower bound in Theorem 1.1 is obtained by a complete $(k-1)$-partite graph $T$, then we can find a graph $G\in\cA$ that is close to $T$ by the the suitability for robustness, and $h(G)$ is close to $h(T)$ by the robustness.

This can be applied to \textbf{regular Turán problems}.
Let $\cA$ consist of regular $F$-free graphs and $\cF$ consist of each other graphs. The study of Turán problems for regular graphs was initiated in \cite{cvk,catu}, where it was shown that for $k\ge 4$ and any $n$, there is a $(k-1)$-partite regular graph with $(1-o(1))|E(T(n,k-1))|$ edges. This implies that $(\cA,\cF)$ is suitable for robust Tur\'an-type functions. Note that counting other subgraphs in regular $F$-free graphs was studied in \cite{gerham2}.

Another example is the \textbf{shadow graph of Berge-$F$-free hypergraphs}. We omit the definitions here and only mention here that an Erd\H os-Stone-Simonovits-type theorem was proved for such graphs in \cite{luwang}, and the proof was by showing that the second property of the above definition holds (the first one holds trivially). Therefore, these graphs define a suitable partition for robust Tur\'an-type functions. Note that counting other subgraphs in such shadow graphs was initiated in \cite{tres}.

\smallskip

Instead of $(1+o(1))h(T)$, we can aim to obtain different bounds. An example is counting graphs $H$ on $h$ vertices, where this bound is of the form $\ex(n,H,F)=\cN(H,T)+o(n^h)$. It is known that in several cases $o(n^h)$ can be replaced by $O(n^{h-\varepsilon})$ for some $\varepsilon=\varepsilon(H)>0$; this implies $g(n,(\cA,\cF))=\cN(H,T)+O(n^{h-\varepsilon})$, which is analogous to Theorem \ref{main}.

Another example is \textbf{counting $n$-vertex $F$-free graphs}. A theorem of Erd\H os, Frankl and R{\"o}dl \cite{efr} states that there are $2^{(1+o(1))\ex(n,F)}$ distinct labeled $F$-free graph on $n$ vertices, if $F$ is not bipartite. This implies that if $(\cA,\cF)$ is a monotone partition with abstract chromatic number $k$, then there are $2^{(1+o(1))|(E(T(n,k-1))|}$ distinct $n$-vertex labeled graphs in $\cA$. The lower bound is obtained by the subgraphs of $T(n,k-1)$, while the upper bound comes from the above-mentioned theorem of Erd\H os, Frankl and R\H odl, since some $k$-chromatic graph is forbidden. 

Note that if we consider graphs with extra structures, this result only states how many underlying graphs there are, and does not say anything about how many orderings of the vertices of such a graph avoid the forbidden subgraphs. However, for the extra structures mentioned in this paper, the difference is negligible. For example, for an edge-ordered graph $F$ with abstract chromatic number $k>2$, we obtain that there are at most $2^{(1+o(1))|(E(T(n,k-1))|}\binom{n}{2}!=2^{(1+o(1))|(E(T(n,k-1))|}$ distinct $F$-free edge-ordered $n$-vertex labeled graphs.

In the case of \textbf{Turán problems for oriented graphs}, a parameter called \textit{compressibility} plays the role of the chromatic number in the analogue of the Erd\H os-Stone-Simonovits theorem \cite{vala}. It can be a direction of future research to examine whether some of our results extend to that setting.

\end{document}